\def\phi{\varphi}
\def\H{H^\infty}
\numberwithin{equation}{section}
\theoremstyle{plain}
\newtheorem{theorem}{Theorem}[section]
\newtheorem{lemma}[theorem]{Lemma}
\newtheorem{corollary}[theorem]{Corollary}
\newtheorem{proposition}[theorem]{Proposition}
\newcommand{\reff}[1]{(\ref{#1})}
\def\K{\mathbb K}
\def\T{ \mathbb T}
\def\R{ \mathbb R}
\def\H{H^\infty}
\def\D{{ \mathbb D}}
\def\C{{ \mathbb C}}
\def\N{{ \mathbb N}}
\def\B{{ \mathscr B}}
\def\e{\varepsilon}
\def\union{\cup}
\def\inter{\cap}
\def\ov{\overline}
\def\ss{\subseteq}
\def\emp{\emptyset}
\def\buildrel#1_#2^#3{\mathrel{\mathop{\kern 0pt#1}\limits_{#2}^{#3}}}
\def\BP{Blaschke product}
\def\IBP{interpolating Blaschke product}
\def\imp{\Longrightarrow}
\begin{document}

\title[Stable Ranks for the Sarason Algebra]{Spectral Characteristics and Stable Ranks for the Sarason Algebra $\H+C$}

\author[R. Mortini]{Raymond Mortini}
\address{\small D\'{e}partement de Math\'{e}matiques\\
\small LMAM,  UMR 7122,
\small Universit\'{e} Paul Verlaine\\
\small Ile du Saulcy\\
\small F-57045 Metz, France}

\email{mortini@poncelet.univ-metz.fr}
 
\author[B. D. Wick]{Brett D. Wick$^*$}
\address{Department of Mathematics\\ University of South Carolina\\ LeConte College\\1523 Greene Street\\ Columbia, SC USA 29206}
\email{wick@math.sc.edu}
\thanks{$*$ Research supported in part by a National Science Foundation DMS Grant \# 0752703.}

\subjclass{Primary 30D50; Secondary 46J15}

 \maketitle


   \section*{Introduction}

We prove a Corona type theorem with bounds for the Sarason algebra ${\H+C}$ and determine
its spectral characteristics, thus continuing a line of research initiated by 
N.~Nikolski. We also determine the Bass, the  dense, and the topological stable ranks of $\H+C$.

To fix our setting, let $A$ be a commutative unital Banach algebra with unit $e$ and
$M(A)$ its maximal ideal space.  The following  concept of spectral characteristics was introduced by N. Nikolski \cite{ni}.  For $a \in A$, let $\hat{a}$
denote the Gelfand transform of $a$. We let
  \begin{linenomath}
  $$\delta(a) = \min_{t \in M(A)} |\hat{a}(t)|.$$
  \end{linenomath}
Note that $\delta(a) \le \|\hat{a}\|_\infty \le  \|a\|_A$. When $a
= (a_1, \ldots, a_n) \in A^n$
we define 
  \begin{linenomath}
  $$\delta_n(a) = \min_{t \in M(A)} |\hat{a}(t)|,$$
    \end{linenomath}
     where
$|\hat{a}(t)|=\sum_{j = 1}^n |\hat{a}_j(t)|$ for $t\in M(A)$ and we let 
\begin{linenomath}
  
$$||a||_{A^n}=\max\{||a_1||_A,\dots,||a_n||_A\}.$$
    \end{linenomath}
    Typically, one defines $|\hat{a}(t)| =|\hat{a}(t)|_2:= (\sum_{j = 1}^n |\hat{a}_j(t)|^2)^{1/2}$
    and  $\|a\|_{A^n}=\|a\|_2:=\bigl(\sum_{j = 1}^n \|a_j\|^2_A\bigr)^{1/2}$.
    Our later calculations 
     will be easier though with the present definition.

Let $\delta$ be a real number satisfying $0 < \delta \le 1$.  We
are interested in finding, or bounding, the functions

  $$c_1(\delta,A) = \sup\{\|a^{-1}\|_A:  \|a\|_A \le 1,\;  \delta(a) \geq \delta\}$$ and
  \begin{equation}\label{niko}
  c_n(\delta,A) = \sup\biggl\{\inf\{\|b\|_{A^n}: \sum_{j = 1}^n a_j b_j = e\},\|a\|_{A^n} \le 1,\;  \delta_n(a) \geq \delta \biggr\},
    \end{equation}
when $A$ is the Sarason algebra $\H+C$.
If $a$ is not invertible, we define $\|a^{-1}\| =
\infty$. 
 
It should be
clear that $1\leq c_n(\delta,A) \le c_{n + 1}(\delta,A)$
and, if $0 < \delta^\prime \le \delta \le 1$, then
$c_n(\delta,A) \le c_n(\delta^\prime,A)$. This
implies the existence of a {\it critical constant}, denoted here
by $\delta_n(A)$ such that
  \begin{linenomath}
  $$c_n(\delta,A) = \infty ~\mbox{for}~ 0 < \delta < \delta_n(A)
~\mbox{and}~ c_n( \delta,A) < \infty ~\mbox{for}~
\delta_n(A) < \delta \le 1.$$
  \end{linenomath}

It is clear that if $A$ is a uniform algebra, then $\delta_1(A) = 0$ and $c_1(\delta, A)=1/\delta$.
It is not known for which uniform algebras $\delta_n(A)>0$.

If $A=\H$, the  algebra  of bounded holomorphic functions in the unit disk $\D$,
then the famous Carleson  Corona Theorem tells us that $\delta_n(\H)=0$ for each $n$.
Estimates for $c_n(\delta,\H)$ were given, among others,  by Nikolski \cite{nik}, Rosenblum \cite{ro},  and Tolokonnikov
\cite{to}.
The best known estimate today seems to appear in \cite{tr2}  
and \cite{tw}, see also \cite{trent}.  Here, for the lower bound, $\delta$ is close to 0, and $\kappa$ is a universal constant:

\begin{linenomath}
$$\kappa\;\delta^{-2}\log\log(1/\delta)\leq c^{(2)}_n(\delta,\H)\leq
\frac{1}{\delta} +\frac{17}{\delta^2}\log\frac{1}{\delta},$$
\end{linenomath}
 where $c^{(2)}_n(\delta, A)$
denotes the spectral characteristic $c_n(\delta, A)$ above (see \reff{niko}), whenever
defined with the euclidian norms $|\cdot|_2$ and $\|\cdot\|_2$.
\medskip

The structure of the paper is as follows.  In Section \ref{one} we consider the 
problem of solving Bezout equations in
 the Sarason algebra $\H+C$.
 Indeed, we prove  that the Corona Theorem with bounds holds in $\H+C$ 
 (that is $\delta_n(\H+C)=0$ for each $n$).  We also give explicit estimates of the associated spectral characteristics.

In Section \ref{two} we present different notions of stable ranks that are relevant
to the topic of this paper.  We also show the relationships between these various notions of stable ranks.

 In Section \ref{three} we determine the Bass and  topological stable ranks
 of $\H+C$.  These results can be considered as a generalization of the Corona Theorem for
$\H$. In particular, we investigate whether any Bezout equation $af+bg=1$ in $\H+C$
 admits a solution where $a$ itself is invertible. We also show that on $\H+C$-convex sets
 in $M(\H+C)$ zero free functions admit $\H+C$- invertible approximants. This
 will be used to determine the dense stable rank of $\H+C$.
 

\section{Spectral Characteristics for the Sarason Algebra $\H+C$}
\label{one}

Since we are dealing only with uniform algebras $A$, we shall identify the elements in 
$A$ with their Gelfand transform $\hat f$.

Let  $L^\infty(\T)$ denote the algebra of essentially bounded, Lebesgue measurable functions on the unit circle $\T$.  Then a \textit{Douglas algebra} is a uniformly closed subalgebra of $L^\infty(\T)$ that properly contains the algebra, $\H$, of (boundary values) of bounded analytic functions 
in the open unit disk $\D=\{z\in\C: |z|<1\}$. We refer the reader to the book of J. B. Garnett 
\cite{ga} for items and results not explicitely defined here.  

The simplest example of a Douglas algebra is the algebra $\H+C$ of sums of (boundary values) of bounded analytic functions and complex-valued continuous functions on $\T$. This algebra is frequently called the Sarason algebra since it was first shown by Sarason, that this space is a closed subalgebra of $L^\infty(\T)$, see \cite{sa}. He showed that (on $\T$), $\H+C$ is the uniform closure of the set of functions  $\{f\ov z^n: f\in\H, n\in\N\}$.   Thus, $\H+C=[\H,\ov z]$, the closed algebra generated by
$\H$ and the monomials $\ov z^n$.

Let $M(\H)$ denote the maximal ideal space of $\H$.  It is well known that the spectrum of $M(\H+C)$ can be identified with the Corona of $\H$, namely the set $M(\H)\setminus\D$, see \cite[p. 377]{ga}.
 We denote by 
 $$
 Z(f)=\{m\in  M(\H): f(m)=0\},
 $$
  the zero set of a function in $\H$. 
 We will also need the notion of pseudo-hyperbolic distance, $\rho(x,m)$,
  of two points $m,x\in M(\H)$. Recall that
  \begin{linenomath}
  $$\rho(x,m)=\inf\{|f(m)|: ||f||_\infty\leq 1, f(x)=0\},$$
  \end{linenomath}
  and that for $z,w\in\D$ we obtain 
  $\rho(z,w)=\left| \frac{z-w}{1-\ov w z}\right|$, where we identify the point evaluation functional
  $f\mapsto f(z)$ with the point $z$ itself.
  
  Also, for a  set $E$ in $M(\H)$, let $\rho(E,x)=\inf\{\rho(e,x): e\in E\}$
  be the pseudo-hyperbolic distance of $E$ to a point $x\in M(\H)$.
  Similarly, $\rho(E,U)=\inf\{\rho(E,u): u\in U\}=\inf\{\rho(e,u):e\in E,u\in U\}$.
  It is well known that for closed sets $E$ the distance functions  $\rho(\cdot,\cdot)$
   and $\rho(\cdot, E)$  are lower semi-continuous
  on $M(\H)$, see \cite{gm} and \cite{hof1}. In particular, if $\rho(E,x)>\eta>0$, then
  there  exists an open set $U$ containing $x$ such that $\rho(E, U)>\eta$.

   Finally,  for a point $m\in M(\H)$, let $P(m)=\{x\in M(\H): \rho(x,m)<1\}$
  denote the Gleason part associated with $m$. 
  For example $\D$ itself is the Gleason part asociated with the origin.
  By Hoffman's theory,
   there exists a map $L_m$ of $\D$ onto $P(m)$ such that $\hat f\circ L_m$ is analytic
   for all $f\in \H$. If $(a_\beta)$ is any net in $\D$ that converges to $m$, then $L_m$
    is given by $L_m(z)=\lim \frac{a_\beta+z}{1+\ov {a_\beta} z}$,
   where the limit is taken in the topological product space $M(\H)^{\D}$.
  
  \bigskip

\subsection{The Corona Property for $\H+C$}

Our first theorem is based on Axler's result  that any function $u\in L^\infty$
can be multiplied by a \BP\ into $\H+C$.  See \cite{ax}.  Using this result it is possible to prove the following assertion.

     \begin{theorem}\label{corona}
   The Corona Theorem with bounds holds on  $\H+C$.   
      \end{theorem}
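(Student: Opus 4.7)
My plan combines Axler's theorem (from the paragraph above), Sarason's density $\H+C = \overline{\{\bar z^N h : h \in \H, N \in \N\}}$, and Carleson's Corona for $\H$.

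First I would exploit the trivial $L^\infty$-Corona. Setting $u_j := \bar f_j/\sum_k|f_k|^2 \in L^\infty(\T)$ gives $\sum_j f_j u_j = 1$ a.e.\ on $\T$ with $\|u_j\|_\infty \leq 1/\delta$, using that the Corona condition $\sum|f_k|\geq\delta$ on $M(\H+C)$ forces the analogous pointwise bound a.e.\ on $\T$. Applying Axler's theorem to each $u_j$ and taking the product of the resulting Blaschke products yields a single Blaschke product $b$ such that $v_j := b u_j \in \H+C$ for every $j$, $\|v_j\|_\infty \leq 1/\delta$, and
\[
\sum_{j=1}^n f_j v_j \;=\; b \qquad \text{in } \H+C.
\]

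Next, to reduce the remaining problem to Carleson's Corona on $\H$, I would approximate in the Sarason sense: for $\eta>0$ and $N$ large, pick $F_j \in \H$ with $\|z^N f_j - F_j\|_\infty < \eta$. Since $|z|\equiv 1$ on $M(\H+C)=M(\H)\setminus\D$, the Gelfand transforms satisfy $|F_j(m)|\geq|f_j(m)|-\eta$ there, so $\sum_j|F_j| \geq \delta-n\eta$ on $M(\H+C)$. To enforce the Corona condition on all of $M(\H)$ I would augment by a small constant $\alpha\leq\delta-n\eta$ to form the $(n+1)$-tuple $(F_1,\ldots,F_n,\alpha)\in\H^{n+1}$, which has $\sum|F_j|+|\alpha|\geq\alpha$ on $M(\H)$; Carleson's theorem then produces $G_j,H\in\H$ with $\sum F_j G_j + \alpha H = 1$ and norms bounded by the Carleson constant $c_{n+1}(\alpha,\H)$.

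Transferring back via $r_j := z^N f_j - F_j \in \H+C$ with $\|r_j\|_\infty < \eta$, I compute
\[
\sum_j f_j(z^N G_j) \;=\; \sum_j(z^N f_j)G_j \;=\; \sum_j F_j G_j + \sum_j r_j G_j \;=\; 1 - V,
\]
where $V := \alpha H - \sum_j r_j G_j \in \H+C$. Once parameters are chosen so that $\|V\|_\infty<1$, the element $1-V$ is invertible in the Banach algebra $\H+C$ by a Neumann series, and $g_j := z^N G_j(1-V)^{-1}$ gives the desired Bezout coefficients in $\H+C$.

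The main obstacle is the quantitative balance needed to ensure $\|V\|_\infty < 1$: since the Carleson bound grows like $c_{n+1}(\alpha,\H) \asymp \alpha^{-2}\log(1/\alpha)$ as $\alpha\to 0$, the naive estimate $\|V\|_\infty \leq (\alpha+n\eta)\,c_{n+1}(\alpha,\H)$ exceeds $1$ as soon as $\delta$ (hence the admissible $\alpha$) is small. This is precisely where Axler's theorem must intervene: the first stage reduces the task from inverting $1$ directly to inverting the Blaschke factor $b$ in the ideal $(f_1,\ldots,f_n)(\H+C)$, and the $L^\infty$-bound $\|v_j\|_\infty \leq 1/\delta$ supplied by Axler combines favorably with the Carleson constant when one plugs the resulting identity $\sum f_j v_j = b$ into the Neumann inversion. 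The explicit estimate for $c_n(\delta,\H+C)$ should fall out of optimizing the resulting two-parameter bound.
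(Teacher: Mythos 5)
You have assembled the correct ingredients (the trivial $L^\infty$ solution, Axler's multiplier theorem giving $\sum_j f_jv_j=b$ with $v_j\in\H+C$, Sarason's density, and Carleson's theorem), and you correctly sense that your argument breaks down at the Neumann step --- but the failure is structural, not a matter of ``optimizing a two-parameter bound.'' If the functions $F_1,\dots,F_n$ have a common zero $z_0\in\D$ (which nothing prevents: the Corona condition you inherit holds only on $M(\H+C)=M(\H)\setminus\D$, hence only on an annulus $r\le|z|<1$), then any solution of $\sum F_jG_j+\alpha H=1$ satisfies $\alpha H(z_0)=1$, so $\|\alpha H\|_{L^\infty(\T)}=\sup_\D|\alpha H|\ge 1$ for \emph{every} choice of $\alpha$ and $\eta$. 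The root cause is that the constant $\alpha$ you adjoin to rescue the Corona condition inside $\D$ does not lie in the ideal generated by your data over $\H+C$, so the term $\alpha H$ is an error that cannot be absorbed; your closing appeal to ``plugging $\sum f_jv_j=b$ into the Neumann inversion'' names the right tool but supplies no mechanism.

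The missing idea, which is exactly how the paper proceeds, is to adjoin not a constant but a \emph{tail} $B_1$ of the Axler Blaschke product itself, chosen so that $|B_1|\ge\delta/2$ on the compact set $\{|z|\le r\}$ where the data may degenerate. Then $(F_1,\dots,F_n,B_1)$ is genuine $\H$-Corona data on all of $\D$ with lower bound $\delta/2$, and Carleson gives $1=\sum x_jF_j+tB_1$ with bounds depending only on $\delta$. The key point is that the quotient $b_1:=B/B_1$ is a \emph{finite} Blaschke product, so $\ov{b_1}\in\H+C$ and $B_1=\ov{b_1}\,B=\sum_j(\ov{b_1}\Phi_j)F_j$ lies in the $\H+C$-ideal generated by the data with coefficients bounded by $n/\delta^2$; substituting this into the Carleson identity yields the \emph{exact} Bezout solution $1=\sum_jF_j(x_j+t\ov{b_1}\Phi_j)$, with no leftover term to invert. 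The paper also reverses your order of operations: it first solves exactly for $\H$-data (where the tail construction lives), obtaining a bound $\chi(\delta)$, and only then treats general $\H+C$-data by Sarason approximation with accuracy $\e$ chosen \emph{after} $\chi(\delta/2)$ is known --- there the perturbation really is of size at most $\e\,\chi(\delta/2)\le 1/2$ and the Neumann series is legitimate. Without the tail construction, no choice of parameters in your scheme can make the perturbation small.
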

      
   \begin{proof}

    {\sl Step 1:} We first consider $\H$-Corona data on $M(\H+C)$.  The goal is to find $\H+C$ solutions.
   
   Let $f_1,\ldots, f_n \in \H$ satisfy $||f_j||_\infty\leq 1$
    and $ |f_1|+\cdots+|f_n|\geq\delta>0$ on $M(\H+C)$.
   In particular, $|f_1|+\cdots+|f_n|\geq \delta>0$ a.e. on $\T$. Hence,
   \begin{linenomath}
   $$1= \sum_{j=1}^nf_j\; \frac{\ov {f_j}}{\sum_{k=1}^n |f_k|^2}, \textnormal{ a.e. on } \T.$$
   \end{linenomath}
   Note that the functions $\frac{\ov {f_j}}{\sum_k |f_k|^2}$
   belong to $L^\infty(\T)$.
   By the Axler Multiplier Theorem, \cite{ax}, there exists a \BP\ $B$ such that for all $j\in\{1,\dots,n\}$
   $$\Phi_j:=B\frac{\ov {f_j}}{\sum_k |f_k|^2}\in \H+C.$$
     So, a.e. on $\T$, we have 
   $B=\sum_j \Phi_j f_j$. Moreover,  
  $||\Phi_j||_\infty \leq n/\delta^2$.

  By continuity, there exists an annulus $A_r:=\{r\leq |z|<1\}$ such that $|f_1|+\cdots+|f_n|\geq \delta/2$
   on $A_r$.  Choose a tail $B_1$ of $B$ such that $|B_1|\geq \delta/2$ on $\{|z|\leq r\}$.
   Let $b_1=B/B_1$. Note that $\ov b_1$, viewed as a function on $\T$, belongs to $\H+C$.
   
      Now consider the ideal $I(f_1,\cdots, f_n,B_1)$ in $\H$.  We obviously have that
      $|f_1|+\cdots+|f_n|+|B_1|\geq \delta/2$ on $\D$. Hence, by the $\H$-Corona Theorem, there exists
      a constant depending on $\delta$, $C(\delta)$, and functions  $x_1,\ldots, x_n ,t\in \H$ with 
      \begin{linenomath}
      $$||x_1||_\infty +\cdots+||x_n||_\infty +||t||_\infty \leq  C(\delta)$$
      \end{linenomath}
      such that $1=\sum_{j=1}^n x_jf_j+tB_1$  on $\D$, and thus almost everywhere on $\T$ we have $1=\sum_{j=1}^n x_jf_j+tB_1$. 
      Switching again to $\H+C$, we see that a.e. on $\T$
 
      \begin{linenomath}
    $$B_1=\ov b_1 B= \sum_{j=1}^n(\ov b_1\Phi_j)f_j.$$
      \end{linenomath}
      Hence,
      \begin{linenomath}
      $$1=\sum_{j=1}^n x_jf_j+t \biggl(\sum_{j=1}^n(\ov b_1\Phi_j)f_j \biggr)=\sum_{j=1}^n f_j\biggl(x_j+t\ov b_1\Phi_j\biggr),$$
     \end{linenomath}
     where $\sum_{j=1}^n||x_j+t \ov b_1\Phi_j||_\infty\leq  
     C(\delta)(1+n^2/ \delta^2)=:\chi(\delta)
     $.
       Since $\H+C$ is an algebra, the functions $\phi_j:=x_j+t \ov b_1\Phi_j$ belong to $\H+C$.
     Thus, we have found a solution with bounds to the Bezout equation
     $\sum_{j=1}^n \phi_j f_j =1$ a.e. on $\T$, or equivalently, on $M(\H+C)$.
         \medskip
     
     \medskip
     
     {\sl Step 2:} We now look at general $\H+C$ Corona data.
     
     Let $F_1,\ldots, F_n\in\H+C$ satisfy $||F_j||_\infty \leq 1$ and $\sum_{j=1}^n|F_j|\geq \delta>0$ on $M(\H+C)$.
     Let $\chi(\delta)$ be the function above. 
     We uniformly approximate $F_j$ by functions of the
      form $\ov z^{N_j} f_j$;
     say 
     \begin{linenomath}
     $$\sum_{j=1}^n||F_j-\ov z^{N_j} f_j||_\infty \leq \e=\e(\delta):=\min\{\delta/4, [4\chi(\delta/2)]^{-1}\},$$
     \end{linenomath}
     where $f_j\in \H$ and $||f_j||_\infty\leq 1$.
     
     Then $ \sum_{j=1}^n|f_j|\geq \delta/2$ on $M(\H+C)$. We apply Step 1 to the functions $f_j$ and find $\H+C$ functions $\phi_j$ and a constant $\chi(\delta/2)$ that bounds the sum of the  norms of these functions and such that  $1=\sum_{j=1}^n\phi_j f_j$. Now 
     \begin{linenomath}
     $$u:=\sum_{j=1}^nz^{N_j}\phi_j F_j  =\sum_{j=1}^nz^{N_j}\phi_j(F_j-\ov z^{N_j}f_j)+\sum_{j=1}^n\phi_j f_j $$
     \end{linenomath}
     \begin{linenomath}
     $$=\gamma +1,$$
     \end{linenomath}
     where $\gamma:=\sum_{j=1}^nz^{N_j}\phi_j(F_j-\ov z^{N_j}f_j)\in\H+C$ is majorized by $\e\sum_{j=1}^n||\phi_j||_\infty \leq 
     \e\,\chi(\delta/2) \leq 1/2$.
     Thus $|u|\geq 1/2$ on $M(\H+C)$; hence $u$ is invertible in $\H+C$ with $||u^{-1}||_\infty\leq 2$.
     We conclude that 
     \begin{linenomath}
     $$1=\sum_{j=1}^n(u^{-1}z^{N_j}\phi_j)F_j$$
     \end{linenomath}
     where the coefficients are bounded by $2\chi(\delta/2)$.    
   \end{proof}
   
   An immediate Corollary (at least for the upper bound)  is the following.
   
   \begin{corollary}\label{niko}
   For every integer $n$ we have $\delta_n(\H+C)=0$ and, for $\delta$ close to $0$,
  \begin{linenomath}
$$\kappa\;\delta^{-2}\log(\log (1/\delta))\leq c_n(\delta,\H+C)\leq 2\chi\left(\frac{\delta}{2}\right),$$
   \end{linenomath}
   where $\chi(\delta)=(1+\frac{n^2}{\delta^2})C(\delta)$ with $C(\delta)$ the best constant in the $\H$-Corona Problem and where $\kappa$ is an absolute constant.
   \end{corollary}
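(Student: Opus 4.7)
The upper bound is immediate from the proof of Theorem~\ref{corona}. In Step~2 of that proof we already exhibited, for every $\H+C$-Corona data $F_1,\dots,F_n$ of parameter $\delta$, explicit solutions $g_j=u^{-1}z^{N_j}\phi_j\in\H+C$ of $\sum_j F_j g_j=1$ whose norms satisfy $\sum_j\|g_j\|_\infty\leq 2\chi(\delta/2)$. Hence $\max_j\|g_j\|_\infty\leq 2\chi(\delta/2)$ and, by the defining formula \reff{niko}, $c_n(\delta,\H+C)\leq 2\chi(\delta/2)$; in particular $\delta_n(\H+C)=0$ for every $n$.

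For the lower bound the plan is to transport the Treil--Wick estimate for $\H$ along Hoffman's theory of Gleason parts. Choose test Corona data $F_1,\dots,F_n\in\H$ on $\D$ witnessing the $\H$-lower bound, so that $\sum_j|F_j|\geq\delta$ on $\D$ while every $\H$-solution has max norm at least $L(\delta):=\kappa\,\delta^{-2}\log\log(1/\delta)$. Viewed in $\H+C$ they still satisfy $\sum_j|F_j|\geq\delta$ on $M(\H+C)\subset M(\H)$. Suppose $g_1,\dots,g_n\in\H+C$ were a solution of $\sum_j F_j g_j=1$ with $\max\|g_j\|_\infty\leq M$, fix $m\in M(\H+C)$ in a non-trivial Gleason part, and consider Hoffman's parametrization $L_m:\D\to P(m)$, which is a pointwise limit of M\"obius automorphisms $w\mapsto(a_\beta+w)/(1+\bar a_\beta w)$ of $\D$.

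The first technical point to verify is that $\hat g\circ L_m\in\H$ with $\|\hat g\circ L_m\|_\infty\leq\|g\|_\infty$ for every $g\in\H+C$. Since $P(m)$ lies in a single fiber of $M(\H+C)$ over some $\lambda\in\T$, the function $\hat z$ is constantly $\lambda$ on $P(m)$; so for $g=f\bar z^N$ with $f\in\H$ we have $\hat g\circ L_m=\bar\lambda^N(\hat f\circ L_m)\in\H$. Uniform approximation together with the density identity $\H+C=[\H,\bar z]$ extends this to all of $\H+C$. Composition of the Bezout identity with $L_m$ then produces an $\H$-Corona solution $\{g_j\circ L_m\}$ of the pulled-back data $\{F_j\circ L_m\}$ with max norm at most $M$.

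The main obstacle is to certify that the pulled-back data $\{F_j\circ L_m\}$ still witnesses the $L(\delta)$ lower bound. Each M\"obius approximant $\phi_\beta$ is an isometry of $\H$ and preserves $\H$-Corona solution norms, so $\{F_j\circ\phi_\beta\}$ is itself extremal for the same $\delta$; one then chooses $m$ carefully (say in the fiber over a non-tangential cluster point of the Treil--Wick test sequence) and runs a normal-family argument to transfer the lower bound across the pointwise limit. This forces $M\geq L(\delta)$, and hence $c_n(\delta,\H+C)\geq\kappa\,\delta^{-2}\log\log(1/\delta)$.
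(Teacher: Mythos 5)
The upper bound is handled exactly as in the paper, and your observation that $\hat g\circ L_m\in\H$ for $g\in\H+C$ (because $\hat z$ is constant on the fiber containing $P(m)$) is correct and is indeed the mechanism the paper uses to push an $\H+C$-solution down to an $\H$-solution on a Gleason part. The lower bound, however, has a fatal gap, and it is precisely the one the paper flags before giving its construction.

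Treil's extremal data consists of two \emph{finite} Blaschke products $B_1,B_2$. Viewed in $\H+C$ this data is not extremal at all: $\ov{B_1}\in\H+C$ and $\ov{B_1}B_1+0\cdot B_2=1$ on $\T$, so the Bezout equation already has an $\H+C$-solution of norm $1$. Hence your hypothesis ``suppose $g_1,\dots,g_n\in\H+C$ solve the equation with $\max\|g_j\|_\infty\le M$'' is satisfied with $M=1$, and no lower bound can be extracted from this data. The pull-back makes this concrete: a finite Blaschke product is continuous and unimodular on $\T$, so $\hat B_j$ is a unimodular constant on each fiber of $M(\H)$ over $\T$, and therefore $B_j\circ L_m$ is a \emph{constant} of modulus one. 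The ``main obstacle'' you defer --- certifying that $\{F_j\circ L_m\}$ still witnesses the $L(\delta)$ lower bound --- cannot be overcome: the pulled-back data is a pair of unimodular constants and witnesses nothing. The normal-family argument fails for the same reason; each M\"obius approximant $\phi_\beta$ is indeed an isometry, but the limit $L_m$ is not, and $B_j\circ\phi_\beta$ degenerates to a constant.

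The missing idea is to run the transfer in the opposite direction: instead of pulling Treil's data back along $L_m$, one must push it forward. The paper takes a thin point $m$ whose associated thin Blaschke product $b$ satisfies $b\circ L_m(z)=e^{i\theta}z$, and uses the data $f_j:=B_j\circ(e^{-i\theta}b)\in\H$. This data still satisfies $|f_1|+|f_2|\ge\delta$ on $M(\H+C)$, but now $f_j\circ L_m=B_j$, so any $\H+C$-solution $(h_1,h_2)$ composes with $L_m$ to give a genuine $\H$-solution of Treil's original equation on $\D$, whence $\|h_1\|_\infty\ge\|h_1\circ L_m\|_\infty\ge\kappa\,\delta^{-2}\log\log(1/\delta)$. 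Without this pre-composition step your argument does not produce any lower bound.
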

     
   \begin{proof}
  It remains to verify the lower estimate.  Here we use a result of Treil \cite[p. 484]{tr2},
   that tells us that there exist two finite \BP s $B_1$ and $B_2$ satisfying
   $|B_1|+|B_2|\geq \delta>0$ on $\D$ such that for any solution $(g_1,g_2)\in (\H)^2$  of the Bezout
   equation $g_1B_1+g_2B_2=1$ we have $||g_1||_\infty\geq \kappa\, \delta^{-2}\log(\log (1/\delta))$
   whenever $\delta>0$ is close to $0$.
   Now, of course, this does not give us an example in $\H+C$, since $1=\ov B_1 B_1+ 0 B_2$
   is a solution with coefficients bounded by 1. We proceed to the following modification.
  
   Let $m$ be a thin point in  $M(\H+C)$, that is a point lying in the
   closure of a thin interpolating sequence, say $(z_n)=(1-1/n!)$. Since the associated
   \BP\ $b$ satisfies $(1-|z_n|^2)|b'(z_n)| \to 1$, Schwarz's Lemma implies  that
   $(b\circ L_m)(z)=e^{i\theta}z$ for every $z\in\D$. Now consider the functions
   $$\mbox{$f_1=B_1\circ(e^{-i\theta} b)$ and 
   $f_2=B_2\circ (e^{-i\theta}b)$}.$$
   Clearly $|f_1|+|f_2|\geq \delta$ on $\D$ and hence, viewed as functions
   in $\H+C$, we have $|f_1|+|f_2|\geq \delta$ on $M(\H+C)$.  Let $(h_1,h_2)\in (\H+C)^2$
   be  a solution  of $h_1f_1+h_2f_2=1$ in $\H+C$.
   Since  $f_j\circ L_m=B_j$, we get that $1=(h_1\circ L_m)B_1+(h_2\circ L_m)B_2$
   in $\D$. Thus, by Treil's result mentioned above, 
   $$||h_1||_\infty\geq ||h_1\circ L_m||_\infty\geq \kappa\, \delta^{-2}\log(\log (1/\delta)).$$
  Thus, 
   $$c_n(\delta, \H+C)\geq c_2(\delta, \H+C)\geq \kappa\, \delta^{-2}\log(\log (1/\delta)).$$
   
      \end{proof}

   \section{Several Notions of Stable Ranks}\label{two}
   
   Let $A$ be  a commutative unital ring.  An $n$-tuple $(a_1,\dots, a_n)\in A^n$
   is said to be {\sl invertible} (or unimodular)  if there is a solution  $(x_1,\dots, x_n)\in A^n$
   of the Bezout equation  $\sum_{j=1}^n  a_jx_j=1$.  Of course this is equivalent to saying
   that the ideal generated by the $a_j$ is the whole ring. The set of all invertible $n$-tuples in $A$
   is denoted by $U_n(A)$.
   
   An $(n+1)$-tuple $(a_1,\dots, a_n, a_{n+1})\in U_{n+1}(A)$ is said to be {\sl $n$-reducible} 
   (or simply reducible) in $A$ if
   there exists $(x_1,\dots,x_n)\in A^n$ such that $(a_1+x_1 a_{n+1},\dots, a_n+x_na_{n+1})$
   is an invertible
    $n$-tuple in $A^n$. It can be shown that if every invertible $n$-tuple in $A$ is reducible,  then
    every invertible $(n+1)$-tuple is reducible (see for example \cite{va}).
     The smallest  integer $n\in \{1,2,\dots\}$,  for which every invertible $(n+1)$-tuple is reducible is called the {\it Bass stable rank} of $A$ and is denoted by 
    $\text{bsr}(A)$.  This notion was introduced in $K$-theory.

    For algebras of continuous or analytic functions, this has been studied e.g. by 
     Corach and Larotonda \cite{cl}, Rupp \cite{ru1, ru2, ru3, ru4}, Su\'arez  \cite{su0, su}
      and Vasershtein \cite{va}.  It was shown by Jones, Marshall and Wolff, see 
    \cite{jmw}, that the Bass stable rank of the disk algebra $A(\D)$ is one.    Later, simpler proofs
    were given by Corach, Su\'arez  \cite{cs1} and Rupp \cite{ru1, ru2}.     Treil later showed that  
     the Bass stable rank of $\H$ is one, \cite{tr}.

   A notion related to the Bass stable rank is that of the topological stable rank. 
    Let $A$ be a commutative unital Banach algebra.  The smallest integer $n$ for which
    the set, $U_n(A)$, of invertible $n$-tuples  is dense in $A^n$ is called the
    {\it topological stable rank} of $A$, denoted by $\text{tsr}(A)$.  This notion was introduced by Rieffel, \cite{ri}, in the study of $C^\ast$-algebras. 
      
Finally, we recall two additional  notions of stable ranks.
Let $\B$ be the class of all commutative unital Banach algebras over a field $\K$.  We will always assume that   algebra homomorphisms, $f$, between members of $\B$
are continuous
and  satisfy $f(1_A)=1_B$. Also, if $f:A\to B$ is an algebra homomorphism, then $\underline{f}$ will denote the associated map given by $\underline f: (a_1,\dots,a_n)\mapsto (f(a_1),\dots,f(a_n))$ from $A^n$ to $B^n$.  

By \cite[p. 542]{cs3}, the {\it dense stable rank} $\text{dsr}(A)$ of  $A\in \B$ 
is the smallest integer $n$ such that for every $B\in\B$ and every  algebra homomorphism
$f:A\to B$  with dense image the induced map $U_n(A)\to U_n(B)$ has dense image.
If there is no such $n$, we write $\text{dsr}(A)=\infty$.
We note that if for some $n\in\N$, all $B\in\B$ and all homomorphisms $f: A\to B$ with dense image
the set $\underline{f}(U_n(A))$ is dense in $U_n(B)$, then $\underline{f}(U_{n+1}(A))$
is dense in $U_{n+1}(B)$, (see   \cite[p. 543]{cs3}).

The {\it surjective stable rank} $\text{ssr}(A)$ of  $A\in \B$  is 
the smallest integer $n$ such that for every $B\in\B$ and every surjective algebra homomorphism
$f:A\to B$ the induced map of  $U_n(A)\to U_n(B)$ is surjective, too.
Again, if there is no such $n$, then we write $\text{ssr}(A)=\infty$. 
Let us point out, that the assumption $\underline{f}(U_n(A))=U_n(B)$ for some $n$, all $B\in\B$ and all surjective homomorphisms $f:A\to B$
 implies that
$\underline{f}(U_{n+1}(A))=U_{n+1}(B)$. This works similarly as  the proof for the corresponding statement
for denseness in \cite[p. 543]{cs3}.

In fact, let $b:=(b_1,\dots,b_{n+1})\in U_{n+1}(B)$.
Consider  for $I=\ov{(b_{n+1})}$ (the closure of the principal ideal generated by $b_{n+1}$)
the quotient algebra $\tilde B:= B/I$ and the quotient mapping
$\pi: B\to \tilde B$.
Then $$(b_1+I,\dots,b_{n}+I)\in U_n(\tilde B).$$
 By our hypothesis, since $\pi f$ is surjective, there exists 
$a:=(a_1,\dots, a_n) \in U_n(A)$ such that  $\pi f(a_j)=\pi b_j$ for $1\leq j\leq n$.
Choose $\e>0$ so that every perturbation $(a_1-r_1, \dots, a_n-r_n)$ of $a$
with $||r_j||_A< \e$ is in $U_n(A)$ again. Using the open mapping theorem, 
let $\eta=\eta(\e)$ be so that
$$\{y\in B: ||y||_B<\eta\}\ss f(\{x\in A:||x||_A<\e\}).$$
Since $f(a_j)-b_j\in I$, there exist
$k_j\in B$ such that  $||f(a_j)-b_j-k_j b_{n+1}||_B<\eta$ ($j=1,\dots,n$). 
Since $f$
is surjective, we may choose $r_j, x_j\in A$, $||r_j||_A<\e$,  and $a_{n+1}\in A$ such that
$f(r_j)=y_j:=f(a_j)-b_j-k_j b_{n+1}$, 
$f(x_j)=k_j$,  ($j=1,\dots,n$),  and $f(a_{n+1})=b_{n+1}$. Then  
$$(a_1-r_1,\dots, a_n-r_n)\in U_n(A)$$ and so
$$(a_1',\dots,a'_{n+1}):=(a_1-r_1-x_1a_{n+1},\dots, a_n-r_n-x_na_{n+1}, a_{n+1})\in U_{n+1}(A).$$
Moreover,  $f(a_j')= f(a_j)-f(x_j)f(a_{n+1})-f(r_j)=f(a_j)-k_jb_{n+1}-y_j=b_j$ for $1\leq j\leq n$
and $f(a'_{n+1})=b_{n+1}$.  Hence $\underline{f}(U_{n+1}(A))=U_{n+1}(B)$.

\medskip

Next, we present relations between these notions of stable rank.  Most of these relations are known and can be found in the papers of Corach and Larotonda, see \cite{cl}, and Su\'arez, see \cite{su0, su}.  Many of the proofs in these papers are based on far reaching concepts and techniques from algebraic geometry, such as Serre fibrations and homotopy classes.  For the reader's convenience we present short direct proofs of some of these facts.

\begin{proposition}\label{st-top}\cite[p. 293]{cl}
Suppose that $U_n(A)$ is dense in $A^n$. 
Then the stable rank of $A$ is less than $n$. Namely, $\text{bsr}(A)\leq \text{tsr}(A)$.
\end{proposition}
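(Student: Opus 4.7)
The plan is to take an invertible $(n+1)$-tuple $(a_1,\dots,a_{n+1})\in U_{n+1}(A)$ with some B\'ezout coefficients $(b_1,\dots,b_{n+1})$ satisfying $\sum_{j=1}^{n+1}a_j b_j=1$, and to construct a reduction by first approximating the witness tuple $(b_1,\dots,b_n)$ rather than the data tuple $(a_1,\dots,a_n)$. By the density hypothesis, for any $\eta>0$ I can choose $(b_1',\dots,b_n')\in U_n(A)$ with $\|b_j-b_j'\|<\eta$, together with coefficients $(e_1,\dots,e_n)\in A^n$ satisfying $\sum_{j=1}^n b_j' e_j=1$. I would then set $x_j:=b_{n+1}e_j$ and $a_j':=a_j+x_j a_{n+1}$ for $j=1,\dots,n$, as the candidate reduction.

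The verification reduces to a single algebraic computation. Pairing the $a_j'$ against the perturbed witnesses $b_j'$ gives
$$\sum_{j=1}^n a_j' b_j'=\sum_{j=1}^n a_j b_j'+a_{n+1}b_{n+1}\sum_{j=1}^n e_j b_j'=\sum_{j=1}^n a_j b_j'+a_{n+1}b_{n+1}=1+\sum_{j=1}^n a_j(b_j'-b_j),$$
where the middle equality uses $\sum_j e_j b_j'=1$ and the last uses the original identity $\sum_{j=1}^n a_j b_j=1-a_{n+1}b_{n+1}$. The error term has norm at most $\eta\sum_{j=1}^n\|a_j\|$, so for $\eta$ sufficiently small the right-hand side is invertible in the Banach algebra $A$ by a geometric series, whence $(a_1',\dots,a_n')\in U_n(A)$ and the original tuple is reduced, giving $\mathrm{bsr}(A)\le n$.

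The main obstacle I anticipate is the correct choice of what to perturb: density of $U_n(A)$ does not directly yield perturbations of the data $(a_1,\dots,a_n)$ of the restricted form $x_j a_{n+1}$ required by reducibility. Perturbing the witnesses $(b_j)$ instead and then multiplying by $b_{n+1}$ manufactures precisely this form. A subtle point I want to emphasize is that the potentially large norms of the coefficients $e_j$, which could blow up as $(b_j')\to(b_j)$ if $(b_j)$ itself lies outside $U_n(A)$, never enter the final estimate: the identity $\sum_j e_j b_j'=1$ is used algebraically to collapse the $a_{n+1}b_{n+1}$ term exactly, and only $\|a_j\|$ and $\|b_j-b_j'\|$ appear in the residual, making $\eta$ a genuinely free parameter.
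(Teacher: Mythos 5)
Your proof is correct and follows essentially the same route as the paper: both arguments perturb the B\'ezout coefficients (not the data) into an invertible $n$-tuple, use the unimodularity of that perturbed tuple to manufacture the correction terms $x_j a_{n+1}$ (your $x_j=b_{n+1}e_j$ is exactly an explicit choice of the $h_j$ with $\sum_j h_j u_j = x$ that the paper invokes), and conclude by observing that the resulting pairing equals $1$ plus an error of norm at most $\eta\sum_j\|a_j\|$, hence invertible by the Neumann series. No gaps.
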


\begin{proof}

Let $(f_1,\ldots,f_n,h)\in U_{n+1}(A)$.  Then there exist $x_j\in A$ and $x\in A$
so that $1=\sum_{j=1}^n x_j f_j +  xh$. Since $U_n(A)$ is dense in $A^n$, for every $\e>0$,
there exists $(u_1,\ldots,u_n)\in U_n(A)$ so that $||u_j-x_j||_A<\e$. Also
$x=\sum_{j=1}^n h_j u_j$ for some $h_j\in A$ because $(u_1,\ldots, u_n)$ is invertible.  Hence
$$
\sum_{j=1}^n u_j(f_j+h_jh)=\sum_{j=1}^n u_jf_j+xh
$$
$$
=\bigl(\sum_{j=1}^n x_j f_j +xh\bigr) +\sum_{j=1}^n (u_j-x_j)f_j=1 +u,
$$
where we have defined $u:=\sum_{j=1}^n (u_j-x_j)f_j$. Moreover, we have $||u||_A\leq \e\sum_{j=1}^n ||f_j||_A$.
Hence for $\e>0$ small enough,  $1+u$ is invertible in $A$, and so $(f_1+h_1h,\dots,f_n+h_nh)\in 
U_n(A)$.
\end{proof}

The following lemma is due to Corach and Su\'arez \cite{cs1,cs2}.
  \begin{lemma}\label{clopen-n}\cite[p. 636]{cs1} and \cite[p. 608]{cs2}
   Let $A$ be  a commutative unital Banach algebra. Then, for $g\in A$, the set 
$$
\mbox{$R_n(g)=\{(f_1,\dots, f_n)\in A^n: (f_1,\dots,f_n, g)$ is reducible $\}$}
$$
 is open-closed inside 
 $$I_n(g)=\{(f_1,\dots, f_n)\in A^n: (f_1,\dots, f_n, g)\in U_{n+1}(A)\}.$$
 In particular, for $n=1$,
if $\phi: [0,1]\to I_1(g)$ is a continuous curve and $(\phi(0),g)$ is reducible, then $(\phi(1),g)$
is reducible.
\end{lemma}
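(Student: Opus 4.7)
My plan is to show that $R_n(g)$ is both open and closed inside $I_n(g)$, and then to deduce the path statement from connectedness of $[0,1]$.

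I would first establish the following reformulation: $(f_1,\dots,f_n,g)$ is reducible if and only if there exist $(w_1,\dots,w_n)\in U_n(A)$ and $v\in A$ with $\sum_{j=1}^n w_jf_j+vg=1$. Indeed, from $\sum_j s_j(f_j+y_jg)=1$ the tuple $(s_j)$ is automatically unimodular, with $(f_j+y_jg)$ as a dual witness, so take $w:=s$ and $v:=\sum_j s_jy_j$. Conversely, if $(w)\in U_n(A)$ admits $(t_j)$ with $\sum_j t_jw_j=1$, then writing $v=\sum_j w_j(vt_j)$ and setting $y_j:=vt_j$ gives $\sum_j w_j(f_j+y_jg)=1$, so $(f_j+y_jg)\in U_n(A)$.

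For openness, given $f\in R_n(g)$ with witness $(w,v)$ and $f'$ close to $f$, set $\sigma:=\sum_j w_j(f'_j-f_j)$. For $f'$ sufficiently close to $f$ we have $\|\sigma\|<1$, so $1+\sigma$ is a unit in $A$, and
$$\sum_{j=1}^n (1+\sigma)^{-1}w_j f'_j + (1+\sigma)^{-1}vg = 1$$
exhibits $((1+\sigma)^{-1}w,(1+\sigma)^{-1}v)$ as a witness for $f'\in R_n(g)$, since its first $n$ entries remain unimodular as a unit multiple of $(w)$.

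For closedness I would prove that the complement $I_n(g)\setminus R_n(g)$ is open by a dual construction. Fix $f^*\in I_n(g)\setminus R_n(g)$ with Bezout coefficients $(x,s)$; for $f$ near $f^*$, the pair $((1+\rho)^{-1}x,(1+\rho)^{-1}s)$ is a Bezout for $(f,g)$, where $\rho:=\sum_j x_j(f_j-f^*_j)$ is small. If by contradiction $f\in R_n(g)$ with witness $(w,v)$, then $w=(1+\rho)^{-1}x+a$ for some $(a,b)$ in the kernel $K(f,g):=\{(a,b)\in A^{n+1}:\sum_ja_jf_j+bg=0\}$, so that $(x+\alpha)\in U_n(A)$ with $\alpha:=(1+\rho)a$. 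One then transports $\alpha$ to a kernel element over $f^*$ by the correction $\tilde\alpha:=\alpha-\xi x$ with $\xi:=\sum_j \alpha_j(f^*_j-f_j)$; a direct computation using $\sum_j x_jf^*_j+sg=1$ shows that $\sum_j\tilde\alpha_j f^*_j\in gA$. Then $x+\tilde\alpha=(x+\alpha)-\xi x$, and if $\xi x$ is small enough to preserve membership in $U_n(A)$, then $f^*\in R_n(g)$, contradicting the hypothesis.

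The main obstacle is precisely this uniformity step: the norms of $\alpha$ and $\xi$ depend on the chosen witness for $f$, which has no a priori bound, so the direct perturbation estimate is not sufficient. The resolution in \cite{cs1,cs2} selects witnesses more carefully, exploiting the stability of $U_n(A)$ under conjugation by units to absorb the uncontrolled norm into a compensating factor. I would invoke this refinement from the Corach--Su\'arez papers rather than re-derive it in detail.

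Finally, the path statement follows by connectedness: if $\phi:[0,1]\to I_1(g)$ is continuous with $\phi(0)\in R_1(g)$, then $\phi^{-1}(R_1(g))$ is a nonempty open and closed subset of the connected interval $[0,1]$, hence equals $[0,1]$, giving $\phi(1)\in R_1(g)$.
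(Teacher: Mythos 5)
Your reformulation of reducibility (namely that $(f_1,\dots,f_n,g)$ is reducible if and only if some Bezout solution $\sum_j w_jf_j+vg=1$ can be chosen with $(w_1,\dots,w_n)\in U_n(A)$), your openness argument via the unit $1+\sigma$, and the final deduction of the path statement from the connectedness of $[0,1]$ are all correct. Note, though, that the paper offers no proof of this lemma at all: it is imported verbatim from Corach and Su\'arez \cite{cs1,cs2} with page references, so there is no internal argument to compare yours against; what can be compared is whether your proposal actually establishes the statement.

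It does not, and you have diagnosed the gap yourself: in the closedness half, the witness $(w,v)$ for a nearby reducible tuple carries no a priori norm bound, so $\alpha=(1+\rho)a$ and $\xi=\sum_j\alpha_j(f^*_j-f_j)$ are uncontrolled; and even if $\xi$ were small in norm, smallness of $\|\xi x\|$ would not by itself keep $x+\alpha-\xi x$ in $U_n(A)$, since the admissible size of a perturbation of $x+\alpha$ depends on the (equally uncontrolled) norm of a Bezout solution for $x+\alpha$. This is not a technicality to be patched by ``selecting witnesses more carefully'': openness is the easy half of the lemma, closedness is its real content, and it is precisely the closedness of $R_n(g)$ in $I_n(g)$ that the paper invokes in the proof of Theorem \ref{bsrdsr}, implication (1)$\Rightarrow$(2). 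The arguments in \cite{cs1,cs2} do not rescue your perturbation scheme; closedness is obtained there by a different route (exploiting invariance of the set of reducible tuples under a continuously acting group with open orbits, so that its complement in $I_n(g)$ is likewise a union of open sets). Deferring the hard half to the same citations the paper uses is defensible as exposition, but as a proof your proposal establishes only half of Lemma \ref{clopen-n}.
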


A very useful characterization of the Bass stable rank is the following. 
Here the equivalence of items (2) and (3) was  known (see \cite{cl}).


\begin{theorem}\label{bsrdsr}
Let $A$ be  a commutative unital Banach algebra. The following assertions are equivalent:
\begin{enumerate}
\item $\underline{\pi}(U_n(A))$ is dense in $U_n(A/I)$ for every closed ideal $I$ in $A$;
\item $\text{bsr} (A)\leq n$;
\item $\underline{\pi}(U_n(A))=U_n(A/I)$ for every closed ideal $I$ in $A$.
\end{enumerate}
 Here $\pi:A\to A/I$ is the canonical quotient mapping and $\underline{\pi}$ the associated map
 on $A^n$.
\end{theorem}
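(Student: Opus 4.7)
The plan is to prove the chain $(3)\Rightarrow(1)\Rightarrow(2)\Rightarrow(3)$. The implication $(3)\Rightarrow(1)$ is immediate, since surjective maps have dense image. So the real work is in the other two implications.

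For $(2)\Rightarrow(3)$, I would start with $b=(b_1,\dots,b_n)\in U_n(A/I)$ and produce an exact lift in $U_n(A)$. Lift each $b_j$ to some $c_j\in A$, and lift a Bezout witness $\sum y_jb_j=1$ in $A/I$ to elements $x_j\in A$; then $\sum x_jc_j=1-h$ for some $h\in I$. Observe that the $(n+1)$-tuple $(c_1,\dots,c_n,h)$ is unimodular in $A$, because $\sum x_jc_j+1\cdot h=1$. Now invoke $\text{bsr}(A)\le n$ to reduce: there exist $r_j\in A$ with $(c_1+r_1h,\dots,c_n+r_nh)\in U_n(A)$. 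Since $h\in I$, the quotient of this tuple is exactly $(b_1,\dots,b_n)$, giving the desired preimage.

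For $(1)\Rightarrow(2)$, let $(a_1,\dots,a_n,a_{n+1})\in U_{n+1}(A)$ and set $I:=\overline{a_{n+1}A}$ (the closed principal ideal). Since $(a_1,\dots,a_{n+1})$ is unimodular, the image $(\pi(a_1),\dots,\pi(a_n))$ is unimodular in $A/I$. By (1), approximate it in $A/I$ by $\underline{\pi}(c_1,\dots,c_n)$ with $(c_1,\dots,c_n)\in U_n(A)$. The hard part, which I expect to be the main obstacle, is passing from this \emph{approximate} lift to a modification of $(a_1,\dots,a_n)$ of the precise form $(a_j+r_ja_{n+1})$ required by reducibility. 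I would handle it in two steps. First, use the quotient-norm characterization to find small $d_j\in A$ with $\pi(d_j)=\pi(c_j-a_j)$, and replace $c_j$ by $c_j':=c_j-d_j$; this keeps the tuple in $U_n(A)$ provided the approximation was fine enough (invertibility is open in $A^n$ by the same openness argument used in Proposition \ref{st-top}), and now $c_j'-a_j\in I$ \emph{exactly}. Second, since elements of $I=\overline{a_{n+1}A}$ are arbitrarily well approximated by elements of the principal ideal $a_{n+1}A$, choose $r_j\in A$ with $\|r_ja_{n+1}-(c_j'-a_j)\|_A$ so small that $(a_1+r_1a_{n+1},\dots,a_n+r_na_{n+1})$ remains in $U_n(A)$ by openness. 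This exhibits reducibility of the original $(n+1)$-tuple, so $\text{bsr}(A)\le n$.

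The bookkeeping of choosing $\varepsilon$'s small enough (in $A/I$, then in $A$, then when approximating by principal-ideal elements) is routine once one fixes, in the initial $(c_1,\dots,c_n)\in U_n(A)$, a Bezout inverse whose norm controls the open neighborhood of invertibility; everything else propagates by continuity of multiplication. The conceptual content is entirely in the observation that $I=\overline{a_{n+1}A}$ is the right closed ideal to make the density hypothesis applicable, and that one can subsequently upgrade "in $I$" to "in $a_{n+1}A$" by a second density argument that costs nothing because $U_n(A)$ is open in $A^n$.
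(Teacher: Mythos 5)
Your implications $(3)\Rightarrow(1)$ and $(2)\Rightarrow(3)$ are correct and are essentially the paper's own argument (lift the tuple and a Bezout witness, note that $(c_1,\dots,c_n,h)$ with $h\in I$ is unimodular, reduce, and observe the reduction projects to the original tuple). The gap is in $(1)\Rightarrow(2)$, at the step where you assert that $(c_1-d_1,\dots,c_n-d_n)$ ``keeps the tuple in $U_n(A)$ provided the approximation was fine enough.'' Openness of $U_n(A)$ gives each unimodular tuple a neighbourhood of unimodular tuples whose radius is governed by the norm of a Bezout inverse of \emph{that particular tuple}; for the approximant $(c_1,\dots,c_n)$ supplied by hypothesis (1) you have no control on this norm, and the tuple is only produced \emph{after} you fix the accuracy $\e$ of the approximation in $A/I$. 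Since $\|d_j\|_A$ can never be made smaller than the quotient norm $\|\pi(c_j-a_j)\|_{A/I}$, you need $\e$ to be small compared with the invertibility radius of $(c_1,\dots,c_n)$ --- a quantity that itself depends on $\e$. This is circular: as $\e\to 0$ the Bezout inverses of the successive approximants may blow up, and no single choice is guaranteed to work. (Your second approximation, replacing an element of $I=\ov{a_{n+1}A}$ by one of $a_{n+1}A$, is harmless, because there the target tuple is already fixed; it is the first perturbation that is not justified.)

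The paper closes exactly this gap with Lemma \ref{clopen-n}: the set $R_n(a_{n+1})$ of tuples reducible with respect to $a_{n+1}$ is \emph{relatively closed} in $I_n(a_{n+1})$. One checks, much as you do, that the approximating tuples (in the paper's notation $(b_j^{(k)}-x_j^{(k)}a_{n+1})_j$) lie in $R_n(a_{n+1})$ --- this requires no unimodularity of the perturbed tuple itself, only that adding back $x_j^{(k)}a_{n+1}$ recovers the unimodular $(b_j^{(k)})_j$ --- and that they converge to $(a_1,\dots,a_n)$, which lies in $I_n(a_{n+1})$ because the original $(n+1)$-tuple is unimodular. Relative closedness then gives reducibility with no quantitative control needed. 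The mechanism behind that closedness is not metric openness of $U_n(A)$ but a transfer identity of the kind displayed in Proposition \ref{bounds}, namely $f_k+u_kg=f(xf_k+yg)+\bigl(y(f_k-f)+u_k\bigr)g$, which exploits a Bezout identity for the \emph{limit} tuple. To repair your proof, either invoke Lemma \ref{clopen-n} or reproduce such an identity; the purely topological argument as written does not suffice.
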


\begin{proof}

(1) $\imp$ (2):  Let $(a_1,\dots,a_n,a_{n+1})\in U_{n+1}(A)$. Consider the closure, $I$, of the
 ideal generated by $a_{n+1}$. Then $A/I$ is a Banach algebra under the quotient norm and
 $(a_1+I,\dots, a_n+I)\in U_n(A/I)$. By (1), 
 there exists a sequence  $(b^{(k)}_1,\dots,b^{(k)}_n)\in U_n(A)$ such that 
 $||\pi(b^{(k)}_j)- \pi(a_j)||_{A/I}\to 0$ as $k\to\infty$ for
 $j=1,\dots, n$.  Hence there are $x^{(k)}_j\in A$ so that 
 $$||a_j-b^{(k)}_j+x^{(k)}_ja_{n+1}||_A\to 0.$$
 Now for every $k$ we have that  the $n+1$-tuples
$$(b^{(k)}_1-x^{(k)}_1a_{n+1},\dots,b^{(k)}_n-x^{(k)}_na_{n+1}, a_{n+1})$$
 are invertible and reducible, since 
 $(b^{(k)}_j-x^{(k)}_ja_{n+1})+x^{(k)}_ja_{n+1}=b^{(k)}_j$
 and $(b^{(k)}_1,\dots,b^{(k)}_n)\in U_n(A)$.
 Using  Lemma \ref{clopen-n}, which tells us that $R_n(a_{n+1})$ is closed inside 
 $I_{n}(a_{n+1})$, and noticing that $b^{(k)}_j-x^{(k)}_ja_{n+1}\to a_j$ for $j=1,\dots,n$,
 we see that  $(a_1,\dots, a_n, a_{n+1})$ is reducible and so $\text{bsr} (A)\leq n$.

 \medskip

(2) $\imp$ (3): This appears in \cite[p. 296]{cl}. For the reader's convenience we present the argument.  Let $(a_1+I,\dots,a_n+I)\in U_n(A/I)$. Then there exist $y_1,\dots, y_n\in A$ and $b\in I$
such that $\sum_{j=1}^n y_ja_j =1+b$. Hence $(a_1,\dots, a_n,b)\in U_{n+1}(A)$ and
so, by (2), there exists $x_1,\dots,x_n\in A$ such that
$$(a_1+x_1b,\dots, a_n+x_nb)\in U_n(A).$$
It is clear that $\pi(a_j+x_jb)=a_j+I$. Hence $\underline{\pi}(U_n(A))=U_n(A/I)$.

\medskip

(3)$\imp$ (1): This is immediate. 
\end{proof}

Parts of the following result  appear without proof in \cite[p. 542]{cs3}.

\begin{theorem}\label{ranks}
If $A$ is a commutative unital Banach algebra, then 
$${\rm bsr}(A)={\rm ssr}(A)\leq {\rm dsr}(A)\leq {\rm tsr}(A).$$
\end{theorem}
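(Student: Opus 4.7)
The plan is to prove the string of inequalities by chaining three separate arguments, all of which lean on Theorem \ref{bsrdsr} together with the open mapping theorem. The key observation is that Theorem \ref{bsrdsr} already encapsulates the delicate content (the clopen argument from Lemma \ref{clopen-n}), so each inequality here should reduce to a short formal verification.

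I would handle the equality $\text{bsr}(A)=\text{ssr}(A)$ as two matching inequalities. For $\text{bsr}(A)\leq\text{ssr}(A)$: if $\text{ssr}(A)\leq n$, apply the defining property of ssr to the quotient map $\pi\colon A\to A/I$ (surjective for every closed ideal $I$) to obtain $\underline{\pi}(U_n(A))=U_n(A/I)$, which is condition (3) of Theorem \ref{bsrdsr}; hence $\text{bsr}(A)\leq n$. For the reverse inequality $\text{ssr}(A)\leq\text{bsr}(A)$: assume $\text{bsr}(A)\leq n$ and let $f\colon A\to B$ be any continuous surjective homomorphism in $\B$. By the open mapping theorem, the canonical factorization $\bar f\colon A/\ker f\to B$ is a Banach algebra isomorphism, and in particular $\underline{\bar f}$ is a bijection sending $U_n(A/\ker f)$ onto $U_n(B)$. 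Combining this with Theorem \ref{bsrdsr}(3) applied to $I=\ker f$ gives
$$\underline{f}(U_n(A))=\underline{\bar f}\bigl(\underline{\pi}(U_n(A))\bigr)=\underline{\bar f}(U_n(A/\ker f))=U_n(B),$$
so $\text{ssr}(A)\leq n$.

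For the middle inequality $\text{bsr}(A)\leq\text{dsr}(A)$: if $\text{dsr}(A)\leq n$, then since every quotient map $\pi\colon A\to A/I$ has dense (in fact surjective) image, the defining property of dsr yields that $\underline{\pi}(U_n(A))$ is dense in $U_n(A/I)$ for every closed ideal $I$. This is condition (1) of Theorem \ref{bsrdsr}, which via the implication (1)$\Rightarrow$(2) gives $\text{bsr}(A)\leq n$.

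Finally, for $\text{dsr}(A)\leq\text{tsr}(A)$: suppose $\text{tsr}(A)\leq n$, i.e.\ $U_n(A)$ is dense in $A^n$. Let $f\colon A\to B$ have dense image, fix $b\in U_n(B)$ and $\e>0$. Since $f(A)$ is dense in $B$, working coordinate by coordinate in the max norm one has that $\underline{f}(A^n)$ is dense in $B^n$, so I can pick $a'\in A^n$ with $||\underline{f}(a')-b||_{B^n}<\e/2$. Using density of $U_n(A)$ in $A^n$, I approximate $a'$ by some $a\in U_n(A)$ within $\e/(2||f||)$ in $A^n$, obtaining $||\underline{f}(a)-b||_{B^n}<\e$. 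This shows $\underline{f}(U_n(A))$ is dense in $U_n(B)$, hence $\text{dsr}(A)\leq n$. No real obstacle is anticipated; the only care needed is the use of the open mapping theorem, which guarantees that the factor $\bar f$ is a topological isomorphism and thus that $U_n$ is preserved, and the observation that the norm convention $||\cdot||_{A^n}=\max_j||\cdot||_A$ fixed in the introduction makes $\underline{f}$ continuous with the same operator norm as $f$.
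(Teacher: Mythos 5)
Your proposal is correct and follows essentially the same route as the paper: both directions of $\mathrm{bsr}(A)=\mathrm{ssr}(A)$ via Theorem \ref{bsrdsr} and the factorization through $A/\ker f$, the middle inequality by applying the dsr property to quotient maps, and the last inequality by pushing the density of $U_n(A)$ in $A^n$ through a dense-image homomorphism. The only cosmetic differences are your explicit appeal to the open mapping theorem (the paper only needs that $\check f$ is an algebra isomorphism, since invertibility of tuples is purely algebraic) and your $\varepsilon$-chasing in place of the paper's closure computation $\underline f\bigl(\overline{U_n(A)}\bigr)\subseteq\overline{\underline f(U_n(A))}$.
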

\begin{proof}

The assertion  that $\text{bsr}(A)=\text{ssr}(A)$ follows from Theorem \ref{bsrdsr}.
Indeed, let $n=\text{ssr}(A)<\infty$. Since for any closed ideal $I$ the canonical map $\pi: A\to A/I$
is surjective,  $\text{ssr}(A)=n$ implies that
$\underline{\pi}(U_n(A))=U_n(A/I)$. Hence, by Theorem \ref{bsrdsr}, $m:=\text{bsr}(A)\leq n$.
To show that $n\leq m$, 
let  $f:A\to B$ be a surjective homomorphism. Then
the canonical injection  $\check f: \tilde A=A/ \text {Ker} f\mapsto B$ is an algebra isomorphism
and so $U_m( \tilde A)$ is mapped onto $U_m(B)$ by $\underline {\check f}$.
Since $ m=\text{bsr}(A)$,   by Theorem \ref{bsrdsr}, 
$\underline{\pi}(U_m(A))=U_m(A/\text {Ker} f)$.
Thus $\underline f(U_m(A))=U_m(B)$. This means that $\text{ssr}(A)\leq m$.
All together we have shown that $\text{bsr}(A)=\text{ssr}(A)$.

Now suppose that $\text{dsr}(A)=n$. Consider the algebra $B:=A/I$, where $I$
is any closed ideal in $A$. Then the assertion that $\text{bsr}(A)\leq \text{dsr}(A)$
follows from Theorem \ref{bsrdsr} when applied to the epimorphism $f=\pi$.

Next we suppose that $\text{tsr}(A)= n$.  To show that $\text{dsr}(A)\leq \text{tsr}(A)$, we note that if $f:A\to B$ has dense image, then
$\underline f: A^n\to B^n$  has dense image as well. Now, the continuity of $f$
and the density of $U_n(A)$ in $A^n$ imply that 
$$ \ov{\underline f(U_n(A))} \supseteq \underline f\hspace{-3pt}\left(\ov{U_n(A)}\right)=
 \underline f(A^n).$$
Therefore, $$U_n(B)\ss B^n=\ov{\underline f(A^n)}\ss \ov {\underline f(U_n(A))}.$$
Since $\underline f(U_n(A))\ss U_n(B)$, we finally obtain that
 $\underline f(U_n(A))$ is dense in $U_n(B)$. Therefore $\text{dsr}(A)\leq n=\text{tsr}(A)$.
 
\end{proof}

It is not known whether always  $\text{bsr}(A)=\text{dsr}(A)$. For 
 $A=\H$, for instance, we have $\text{bsr}(\H)=\text{dsr}(\H)=1$ (see \cite{nisu, su0, tr})
and $\text{tsr}(\H)=2$ (see \cite{su});
for $A=\H_\R=\{f\in\H: \ov{f(\ov z)}=f(z)\}$ we have
$\text{bsr}(\H_\R)=\text{dsr}(\H_\R)=\text{tsr}(\H_\R)=2$,  (see \cite{mw}).

\bigskip
 
Our next result, which seems to be new, gives a version of Lemma \ref{clopen-n} with bounds.
\begin{proposition}\label{bounds}
Let $(f,g)$ be an invertible pair in the commutative unital Banach algebra 
$A$. Suppose that  $f_n$ converges to $f$ and 
 that there exist a constant $K\geq 1$ and $u_n\in A$ such that  $f_n+u_ng$
is invertible with 
\begin{linenomath}
$$||u_n||_A+||f_n+u_ng||_A+||(f_n+u_ng)^{-1}||_A\leq K.$$
\end{linenomath}
Then  there is $h\in A$ such that $f+hg$ is invertible and
\begin{linenomath}
$$ ||h||_A+||f+hg||_A+||(f+hg)^{-1}||_A\leq 8K.$$
\end{linenomath}
\end{proposition}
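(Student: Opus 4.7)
The plan is to take $h := u_N$ for one sufficiently large index $N$, with ``sufficiently large'' quantified explicitly in terms of $K$ via a Neumann-series argument. No iteration, no clever selection of a subsequence, and no use of the invertibility of $(f,g)$ should be needed; the whole argument is a quantitative version of the openness of the set of invertible elements.

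First I would set $v_n := f_n + u_n g$. The single inequality in the hypothesis forces each of $\|u_n\|_A$, $\|v_n\|_A$, and $\|v_n^{-1}\|_A$ to be at most $K$ individually. Since $f_n \to f$ in $A$, I choose $N$ so large that $\|f - f_N\|_A \leq \tfrac{1}{2K}$, and I set $h := u_N$.

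Next I would exploit the factorization
\begin{linenomath}
$$f + hg \;=\; v_N + (f - f_N) \;=\; v_N\bigl(1_A + v_N^{-1}(f - f_N)\bigr).$$
\end{linenomath}
Since $\|v_N^{-1}(f - f_N)\|_A \leq K \cdot \tfrac{1}{2K} = \tfrac12$, the Neumann series yields an inverse of $1_A + v_N^{-1}(f - f_N)$ of norm at most $2$, and hence $f + hg$ is invertible with $\|(f + hg)^{-1}\|_A \leq 2K$. The companion bounds $\|h\|_A = \|u_N\|_A \leq K$ and $\|f + hg\|_A \leq \|v_N\|_A + \|f - f_N\|_A \leq K + \tfrac{1}{2K} \leq 2K$ (using $K \geq 1$) are immediate, so the sum comes to at most $5K$, comfortably inside the advertised $8K$.

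There is no real obstacle here: the delicate-looking triple bound in the hypothesis is a packaging device, since $\|u_n\|_A+\|v_n\|_A+\|v_n^{-1}\|_A \leq K$ automatically controls each summand by $K$. The slack between the $5K$ actually obtained and the claimed $8K$ is there precisely so that the threshold $\|f - f_N\|_A \leq 1/(2K)$ need not be optimized; any reasonable choice of smallness for $\|f - f_N\|_A$ (with geometric dependence on $K$) would suffice.
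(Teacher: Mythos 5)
Your proof is correct, but it follows a genuinely different and in fact simpler route than the paper's. The paper starts from the invertibility of the pair $(f,g)$, writes $1=xf+yg$, establishes the algebraic identity $f_n+u_ng=f(xf_n+yg)+\bigl(y(f_n-f)+u_n\bigr)g$, and then takes $h=\bigl(y(f_n-f)+u_n\bigr)(xf_n+yg)^{-1}$ for $n$ large enough that $xf_n+yg=1+x(f_n-f)$ is invertible with $\|xf_n+yg\|_A\leq 2$, $\|(xf_n+yg)^{-1}\|_A\leq 2$, and $\|f-f_n\|_A\leq\min\{1,\|y\|_A^{-1}\}$; this yields $\|h\|_A\leq 2(1+K)$ and the totals $2+6K\leq 8K$. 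You instead take $h=u_N$ outright, observe $f+hg=v_N\bigl(1+v_N^{-1}(f-f_N)\bigr)$ with $v_N=f_N+u_Ng$, and invert by Neumann series once $\|f-f_N\|_A\leq 1/(2K)$. Your computation is sound: each summand in the hypothesis is individually at most $K$, the perturbation has norm at most $1/2$, and the three bounds $K$, $K+\tfrac12\leq 2K$, and $2K$ sum to at most $5K\leq 8K$. Your argument buys three things: it is shorter, it gives a sharper constant, and it shows the hypothesis that $(f,g)$ is an invertible pair is redundant (it is of course implied by the conclusion). The paper's version buys nothing extra for the statement as given; its only distinguishing feature is that the constructed $h$ is expressed through the Bezout data $(x,y)$, which is not needed anywhere in the applications. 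So your proposal stands as a valid, and arguably preferable, proof of the proposition.
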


\begin{proof}
Let $(x,y)\in A^2$ be so that $1=xf+yg$. Then
\begin{linenomath}
$$f_n+u_ng=f(xf_n+yg)+\bigl( y(f_n-f)+u_n\bigr) g.$$
\end{linenomath}
Since $||f_n-f||_A\to 0$ we may choose $n_0$ so big that for all $n\geq n_0$ the elements $xf_n+yg$
are invertible in $A$,  that
\begin{linenomath}
$$\mbox{$ ||xf_n+yg||_A\leq 2$ and $||(xf_n+yg)^{-1}||_A\leq 2$},$$
\end{linenomath}
and so that 
\begin{linenomath}
$$||f-f_n||_A\leq \min\{1,||y||_A^{-1}\}.$$
\end{linenomath}
If we let 
\begin{linenomath}
$$h= \bigl( y(f_n-f)+u_n\bigr) (xf_n+yg)^{-1},$$
\end{linenomath}
 then $f+hg$ is invertible.
 Noticing that by hypothesis $||u_n||_A\leq K$, we get
 \begin{linenomath}
 $$||h||_A\leq 2 (1+K), $$
 \end{linenomath}
  \begin{linenomath}
 $$||(f_n+u_ng)(xf_n+yg)^{-1}||_A\leq 2 K$$
 \end{linenomath}
  and
  \begin{linenomath}
  $$||(xf_n+yg)(f_n+u_ng)^{-1}||_A\leq 2 K.$$
  \end{linenomath}
  Since $K\geq1$,
  \begin{linenomath}
  $$||h||_A+||f+hg||_A+ ||(f+hg)^{-1}||_A\leq 8K.$$
  \end{linenomath}
 \end{proof}

\section {The Stable Ranks of  $\H+C$} \label{three}

It is the aim of this section to determine the stable ranks defined above for
the Sarason algebra $\H+C$.  To this end let us recall the 
following Theorem of S. Treil \cite{tr} that tells us in particular that
$\text{bsr}(\H)=1$.

\begin{theorem}[Treil]\label{treil}
 There exists a constant $C(\delta)$ depending only on $\delta\in ]0,1[$
 such that for every pair $(f,g)$ of elements in the unit ball of
 $\H$ satisfying
 $|f|+|g|\geq \delta>0$ in $\D$,  there are functions $u,h\in\H$ with 
  $u$ invertible in $\H$, such that
 $1=uf+hg$ and $||u||_\infty+||u^{-1}||_\infty+||h||_\infty\leq C(\delta)$.
\end{theorem}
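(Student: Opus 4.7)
The plan is to start from a standard Carleson corona solution and then additively perturb its first coordinate into an invertible element of $\H$. I would first invoke the Carleson Corona Theorem to obtain $x_0,y_0\in\H$ with $x_0f+y_0g=1$ on $\D$ and $\max(\|x_0\|_\infty,\|y_0\|_\infty)\le C_0(\delta)$. Any candidate of the form $(u,h)=(x_0+\lambda g,\,y_0-\lambda f)$ with $\lambda\in\H$ automatically satisfies $uf+hg=1$, so the task reduces to choosing $\lambda$ so that $u=x_0+\lambda g$ is invertible in $\H$ with quantitative control of $\|\lambda\|_\infty$, $\|u\|_\infty$ and $\|u^{-1}\|_\infty$ in terms of $\delta$.

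Next I would observe that on the hull $Z(g)\subset M(\H)$ the identity $x_0f=1$ combined with $|f|\ge\delta$ forces $1\le|x_0|\le 1/\delta$, and the Bezout identity $x_0f+y_0g=1$ globally upgrades this into a corona-type estimate $|x_0|+|g|\ge\eta(\delta)>0$ on $\D$. So $(x_0,g)$ is itself an invertible pair with a controlled corona constant, and the remaining question is whether the coset $x_0+g\H$ contains a genuinely invertible element of $\H$ with controlled reciprocal norm. The ansatz I would attempt is $u=e^{\psi}$ with $\psi\in\H$: given any such $\psi$ satisfying $\|\psi\|_\infty\le C(\delta)$ and $e^{\psi}-x_0\in g\H$, the element $\lambda\in\H$ defined by $\lambda g=e^{\psi}-x_0$, together with $h=y_0-\lambda f$, automatically yields all three norms $\|u\|_\infty,\|u^{-1}\|_\infty,\|h\|_\infty$ bounded by a constant depending only on $\delta$.

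The construction of $\psi$ is where the main obstacle lies. Factoring $g=I_gO_g$ into inner and outer parts and approximating $I_g$ by Blaschke products reduces matters to interpolating a branch of $\log x_0$ at the zeros $(z_n)$ of a Blaschke product $B$, with values confined to the strip $\{w\in\C:0\le\mathrm{Re}\,w\le\log(1/\delta)\}$. For an interpolating sequence $(z_n)$ this is immediate from the isomorphism $\H/B\H\cong\ell^\infty$ and the open mapping theorem; for a general Blaschke product, however, the modulus bound $|x_0|\in[1,1/\delta]$ on $Z(g)$ controls only $\mathrm{Re}\,\psi$, while the behavior of $\arg x_0$ across different Gleason parts of $Z(g)$ produces a winding obstruction to any naive interpolation. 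Resolving this quantitatively would require a Wolff-type $\bar\partial$-construction against the Carleson measure furnished by the corona pair $(x_0,g)$, combined with a coherent global choice of logarithmic branch so as to keep $\|\psi\|_\infty$ bounded in terms of $\delta$ alone; this is the delicate analytic heart of Treil's argument and the step I would expect to be the hardest.
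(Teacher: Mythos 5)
The paper does not prove this statement: it is quoted verbatim as Treil's theorem with a citation to \cite{tr}, so there is no internal proof to measure your attempt against. Judged on its own terms, your first two paragraphs are correct and standard: the reduction to finding $\lambda\in\H$ with $x_0+\lambda g$ invertible (and controlled) is the right framing, the identity $uf+hg=1$ for $(u,h)=(x_0+\lambda g,\,y_0-\lambda f)$ is verified correctly, and the estimates $1\le |x_0|\le 1/\delta$ on $Z(g)$ together with $|x_0|+|g|\ge \eta(\delta)$ on $\D$ follow as you say from the Bezout identity and the corona bounds.

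The gap is that your third paragraph is not a proof of anything: producing $\psi\in\H$ with $\|\psi\|_\infty\le C(\delta)$ and $e^{\psi}-x_0\in g\H$ \emph{is} Treil's theorem, in an equivalent formulation, and you leave it entirely unconstructed except in the easy case where the relevant zero set is an interpolating sequence. Your own text concedes this. The obstruction you name (coherent choice of $\arg x_0$ across the zero set, i.e.\ the winding/branch problem) is real and cannot be dissolved by the soft tools you invoke: for general uniform algebras the analogous reducibility statement is false, so nothing short of the hard-analysis construction (Treil's, which occupies most of \cite{tr}) closes it. Two further points to be careful about if you pursue this route: (i) invertible elements of $\H$ are not all of the form $e^{\psi}$ with $\psi\in\H$ (an invertible function has bounded $\log|u|$ but possibly unbounded argument), so the exponential ansatz restricts you to the principal component of the invertible group and the branch problem is not merely technical; (ii) membership in $g\H$ requires divisibility by the outer factor of $g$ as well, so reducing to interpolation at the zeros of a Blaschke product approximating the inner factor of $g$ does not by itself produce the required $\lambda=(e^{\psi}-x_0)/g$ in $\H$.
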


   The following well known result can easily be deduced from Treil's Theorem.
   \begin{proposition}
   There exists a constant $C(\delta)$  depending only on $\delta$ such that for every
    $f_1,\dots,f_n$ in $\H$ satisfying $1\geq\sum_{j=1}^n |f_j|\geq \delta>0$  in $\D$, there exist
     $a_j, t_j\in \H$
    bounded by $C(\delta)$ so that 
    \begin{linenomath}
    $$1=\sum_{j=1}^{n-1}a_j(f_j+ t_jf_{n}).$$
    \end{linenomath}
   \end{proposition}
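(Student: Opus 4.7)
My plan is to combine the $\H$-Corona Theorem with Treil's Theorem (Theorem~\ref{treil}) by grouping $f_2,\dots,f_n$ into a single auxiliary function that, paired with $f_1$, automatically satisfies Treil's hypothesis.

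First I would apply the Corona Theorem for $\H$ to produce $y_1,\dots,y_n\in\H$ with $\sum_{j=1}^n y_j f_j=1$ and $\|y_j\|_\infty\le K(\delta)$. Set $H:=\sum_{j=2}^n y_j f_j = 1-y_1 f_1$ and $M:=(n-1)K(\delta)$, so that $\|H/M\|_\infty\le 1$. The key observation is that the pair $(f_1,H/M)$ inherits a uniform Corona lower bound essentially for free: wherever $|f_1(z)|<1/(2K)$ one has $|y_1 f_1|<1/2$, so $|H(z)|=|1-y_1 f_1|>1/2$; elsewhere $|f_1(z)|\ge 1/(2K)\ge 1/(2M)$. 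Therefore $|f_1|+|H/M|\ge \delta':=1/(2M)$ on $\D$.

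Second, I would apply Theorem~\ref{treil} to $(f_1,H/M)$ to obtain $u\in\H$ invertible and $h\in\H$ with $uf_1+h(H/M)=1$ and $\|u\|_\infty+\|u^{-1}\|_\infty+\|h\|_\infty\le C(\delta')$. Expanding gives $uf_1+\sum_{j=2}^{n-1}(hy_j/M)f_j+(hy_n/M)f_n=1$. Finally, I would absorb the $f_n$-term into the $f_1$-term: set $a_1:=u$, $a_j:=hy_j/M$ for $2\le j\le n-1$, $t_1:=(hy_n/M)u^{-1}$, and $t_j:=0$ for $2\le j\le n-1$. A direct check then yields $\sum_{j=1}^{n-1}a_j(f_j+t_jf_n)=1$, with all coefficients bounded by constants depending only on $\delta$ (and $n$).

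The step that carries the real content is the application of \emph{Treil's} Theorem rather than the plain Corona Theorem: the invertibility of $u$ is precisely what permits the absorption $t_1=(hy_n/M)u^{-1}$, converting the $n$-term Bezout identity into an $(n-1)$-term one. Without that invertibility we would need $(y_1,\dots,y_{n-1})$ itself to satisfy a Corona lower bound, which it generally does not. Once this point is recognized, the remaining steps are just quantitative bookkeeping of the constants $K(\delta)$, $M$, and $C(\delta')$.
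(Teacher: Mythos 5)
Your argument is correct and is essentially the paper's own proof: both apply the $\H$-Corona theorem, group $f_2,\dots,f_n$ into the single auxiliary function $\sum_{j\ge 2}y_jf_j$, observe that it together with $f_1$ satisfies a uniform Corona lower bound, invoke Treil's theorem to make the coefficient of $f_1$ invertible, and then absorb the $f_n$-term via multiplication by $u^{-1}$. The only blemish is the normalization $M=(n-1)K(\delta)$, which makes your final constant depend on $n$, whereas the hypothesis $\sum_j|f_j|\le 1$ already gives $\|H\|_\infty\le K(\delta)$ (or simply use $H=1-y_1f_1$), so the $n$-dependence disappears and the constant depends on $\delta$ alone, as the statement requires.
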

   
   \begin{proof}
   By the $\H$-Corona Theorem, \cite{c}, there is a constant $C_1(\delta)$ such that the Bezout equation
   $\sum_{j=1}^n x_jf_j=1$ admits a solution $(x_1,\ldots, x_{n})\in({\H})^{n}$ with 
   \begin{linenomath}
    $$\sum_{j=1}^{n}||x_j||_\infty \leq C_1(\delta).$$
    \end{linenomath}
      We may assume that $C_1(\delta)\geq 1$.
    Now 
     \begin{linenomath}
    $$1\leq ||x_1||_\infty  |f_1| +\left\vert\sum_{j=2}^{n}x_j f_j\right\vert\leq C_1(\delta)\left( |f_1|+\left\vert\sum_{j=2}^n x_jf_j\right\vert\right) ;$$ 
    \end{linenomath}
    hence 
    $$2+C_1(\delta)\geq |f_1|+\left\vert\sum_{j=2}^nx_j f_j\right\vert\geq \frac{1}{ C_1(\delta)}:=\e>0.$$
    By Treil's Theorem there  is a constant  $C_2(\e)$ and
    $u,v\in\H$ such that $u$ is invertible in $\H$, $||u||_\infty +||u^{-1}||_\infty +||v||_\infty \leq C_2(\e)$
   with 
   $$
   1=u f_1 +v(\sum_{j=2}^nx_j f_j).
   $$
   The latter equation can be re-written as
  \begin{linenomath}
   $$ 1=u(f_1+u^{-1}vx_n f_n)+ \sum_{j=2}^{n-1} vx_j(f_j+0\cdot f_n).$$
   \end{linenomath}
   
   It is clear that the functions $a_1:=u$, $t_1:=u^{-1}vx_n$, $a_j:= vx_j$ and $t_j:=0$ for $j=2, \ldots, n-1$ are bounded by a constant $C(\delta)$ depending only on $\delta$.

   \end{proof}

   The following Theorem given by Laroco \cite[p. 819]{la} will be essential for our
   determination of the stable rank of $\H+C$.
   
   \begin {theorem} [Laroco]\label{laro}
   Let $f\in\H$. Then, for every $\e>0$,  there exist a \BP\ $B$ and an outer function, $v$,
   invertible in $\H$ such that
   \begin{linenomath}
   $$\mbox{$||f-Bv||_\infty<\e$ and $ |v|\geq \e/4$ $\textnormal{ on } \partial\mathbb{D}$,}$$
  as well as $||v||_\infty\leq 1+||f||_\infty$.
   \end{linenomath}
   \end{theorem}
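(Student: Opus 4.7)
The strategy is to leverage the inner--outer factorization of $f$. After scaling, assume $\|f\|_\infty \leq 1$ (the case $f\equiv 0$ is trivial, taking $B=1$ and $v$ a small constant). Write $f = IF$ with $I$ inner and $F$ outer, and further decompose $I = B_0 S$ into a Blaschke product $B_0$ times a singular inner function $S$. The goal is then to approximate $I$ by a Blaschke product and $F$ by an invertible outer function, and to combine them compatibly.

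For the outer part, I would take $v$ to be the outer function prescribed by its boundary modulus
\meq{|v(e^{i\theta})| := \max\bigl(|F(e^{i\theta})|, \varepsilon/4\bigr).}
Then $v$ is outer, $|v|\geq \varepsilon/4$ on $\partial\D$ (so $v$ is invertible in $\H$), and $\|v\|_\infty \leq \max(\|F\|_\infty, \varepsilon/4)\leq 1+\|f\|_\infty$. For the singular inner factor, I would invoke Frostman's theorem: for every $\alpha\in\D$ outside a set of logarithmic capacity zero, the shift $\tau_\alpha(S)=(S-\alpha)/(1-\ov\alpha S)$ is a Blaschke product, and $\|S-\tau_\alpha(S)\|_\infty=O(|\alpha|)\to 0$. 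Pick such an $\alpha$ with modulus sufficiently small to obtain a Blaschke product $B_S$ arbitrarily close to $S$ in $L^\infty$, and set $B:=B_0B_S$.

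To estimate the total error $\|f-Bv\|_\infty$, I would split $\partial\D$ into the ``bad set'' $\{|f|<\varepsilon/4\}$ and the ``good set'' $\{|f|\geq \varepsilon/4\}$. On the bad set, $|v|=\varepsilon/4$, and the triangle inequality gives $|f-Bv|\leq |f|+|v|<\varepsilon/2$ automatically. On the good set, $|v|=|f|$, so the difference $|f-Bv|^2=(|f|-|v|)^2+4|f||v|\sin^2(\phi/2)$ reduces to controlling the phase mismatch $\phi=\arg f-\arg(Bv)$. The Frostman bound yields $\|I-B\|_\infty$ small, contributing $\|I-B\|_\infty\|F\|_\infty$ to the $L^\infty$ error via $f-Bv=(I-B)F+B(F-v)$.

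The \emph{main obstacle} is the remaining term $B(F-v)$: although $|F|$ and $|v|$ coincide on the good set, the two outer functions $F$ and $v$ need not agree there, because their boundary arguments are the conjugate Poisson integrals of $\log|F|$ and $\log|v|$ respectively, and these logarithmic densities disagree on the bad set. Naively $\|F-v\|_\infty$ can be of order $\|F\|_\infty$. To absorb this argument discrepancy, I would replace $B$ by $B\cdot B'$, where $B'$ is an additional Blaschke factor chosen to approximate the unimodular function $F/v$ (which has modulus $1$ precisely on the good set) sufficiently well; such a $B'$ is produced by combining Frostman's theorem applied to the inner factor of $F/v$ with the fact that on the bad set the weight $|v|=\varepsilon/4$ is small, so any argument mismatch there contributes at most an extra $\varepsilon/2$. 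Tracking the constants then yields $\|f-Bv\|_\infty<\varepsilon$ and all the claimed bounds on $v$.
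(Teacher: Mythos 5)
The paper offers no proof of this statement; it is imported verbatim from Laroco \cite[p. 819]{la}, so your argument has to stand entirely on its own. Its skeleton is sound and is the natural one: factor $f=IF$, use Frostman shifts to replace the singular inner factor by a Blaschke product, take $v$ outer with $|v|=\max(|F|,\e/4)$ on $\T$, and split $\T$ into the bad set $\{|f|<\e/4\}$ (where the triangle inequality suffices) and the good set (where $|F|=|v|$). You also correctly isolate the real difficulty: $F$ and $v$ have equal moduli on the good set but their boundary arguments, being conjugate functions of $\log|F|$ and $\log|v|$, disagree there because the logarithms disagree on the bad set.

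The gap is that your proposed repair does not close this. Write $F/v=JW$ with $J$ inner and $W$ outer. Frostman's theorem disposes of $J$, but $W$ is again an outer function with $|W|=1$ on the good set and $|W|<1$ on the bad set --- an object of exactly the same shape you started with, so the step is circular. On the good set, where the weight $|v|$ is \emph{not} small, $|F-v|=|v|\,|1-F/v|$ is governed by $\arg W$, which is the conjugate function of $\log|W|$ (supported on the bad set) and can be large and wind many times around the circle: if $|F|=1$ on half of $\T$ and $|F|=\eta$ on the other half, then near the endpoints of the good arc $\arg(F/v)$ grows like $\log(\e/(4\eta))$, unbounded as $\eta\to 0$. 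Frostman's theorem approximates \emph{inner} functions uniformly on all of $\T$; it provides no Blaschke product $B'$ whose boundary values track a prescribed winding unimodular function on a proper subset of $\T$. That one-sided approximation --- a single-Blaschke-product, restricted-set analogue of the Douglas--Rudin theorem, which uses the freedom on the bad set to absorb the winding --- is the actual content of Laroco's theorem and requires a separate lemma that your outline does not supply. Everything before that point is correct bookkeeping, but without this lemma the proof is incomplete.
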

   
   We note that, due to the fact that $v$ is invertible, we actually have  $|v|\geq \e/4$
   on $\D$.
    We additionally need the following lemma.
    
   \begin{lemma}\label{special}
  Let $B$ be a Blaschke product and $g\in\H$ with norm less than or equal to one.  Suppose that  $|B|+|g|\geq \delta>0$ on $M(\H+C)$.  Then there exists a constant $C(\delta)$, depending only on $\delta$, and functions $h$ and $u$ in $\H+C$ with $u$ invertible in $\H+C$ so that
   $||u||_\infty , ||u^{-1}||_\infty $ and $||h||_\infty $ are bounded by $C(\delta)$ and such that
   \begin{linenomath}
   $$ 1=uB+h g.$$
   \end{linenomath}
   \end{lemma}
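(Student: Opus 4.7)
The plan is to reduce the statement to Treil's Theorem (Theorem~\ref{treil}) in $\H$ by peeling off a \emph{finite} Blaschke factor of $B$, exploiting the fact that a finite Blaschke product is unimodular on $\T$ and hence invertible in $\H+C$ with both itself and its inverse having norm $1$. In this way, a factor that is a genuine obstruction in the $\H$-setting (it has interior zeros) becomes completely harmless in the $\H+C$-setting.

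First I would convert the hypothesis $|B|+|g|\geq\delta$ on $M(\H+C)$ into an annulus estimate on $\D$. Since $|B|+|g|$ is lower semi-continuous on $M(\H)$ and is bounded below by $\delta$ on the closed set $M(\H+C)=M(\H)\setminus\D$, the open set $\{m\in M(\H):|B(m)|+|g(m)|<\delta/2\}$ is disjoint from $M(\H+C)$, hence its intersection with $\D$ is relatively compact in $\D$. Thus there is an $r\in(0,1)$ with $|B|+|g|\geq\delta/2$ on the annulus $A_r=\{r\leq|z|<1\}$. Next I would factor $B=B_0B_1$ where $B_0$ collects the zeros of $B$ in $\{|z|\leq r'\}$ for some $r'\in(r,1)$ (so $B_0$ is a finite Blaschke product) and $B_1$ is the tail. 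Choosing $r'$ close enough to $1$, the standard tail estimate (as in Step~1 of Theorem~\ref{corona}) yields $|B_1|\geq\delta/2$ on $\{|z|\leq r\}$, while on $A_r$ the bound $|B_0|\leq 1$ gives $|B_1|\geq|B|$ and so $|B_1|+|g|\geq|B|+|g|\geq\delta/2$. Thus $(B_1,g)$ is an $\H$-Corona pair with bound $\delta/2$, and Treil's Theorem produces $u_1,h_1\in\H$ with $u_1$ invertible in $\H$, bounds $\|u_1\|_\infty+\|u_1^{-1}\|_\infty+\|h_1\|_\infty\leq C_1(\delta/2)$, and
\begin{equation*}
u_1B_1+h_1g=1.
\end{equation*}

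Finally, since $B_0$ is a finite Blaschke product, $\bar B_0\in C(\T)\subset\H+C$ is invertible in $\H+C$ with inverse $B_0$ and $\|B_0\|_\infty=\|\bar B_0\|_\infty=1$. Setting $u:=u_1\bar B_0\in\H+C$ and $h:=h_1$, the identity $|B_0|^2=1$ a.e.\ on $\T$ yields
\begin{equation*}
uB+hg=u_1\bar B_0\,B_0B_1+h_1g=u_1B_1+h_1g=1,
\end{equation*}
which persists on $M(\H+C)$. Moreover $u$ is invertible in $\H+C$ with inverse $u_1^{-1}B_0$, and all three quantities $\|u\|_\infty,\|u^{-1}\|_\infty,\|h\|_\infty$ are bounded by $C_1(\delta/2)=:C(\delta)$. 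The main (and essentially only) subtlety is the very first step: cleanly translating the abstract Corona hypothesis on $M(\H+C)$ into the concrete annulus estimate on $\D$ that is needed to invoke Treil's Theorem in $\H$; after that the argument reduces to the unimodular trick $B_0\bar B_0=1$ on $\T$.
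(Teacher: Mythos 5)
Your proposal is correct and follows essentially the same route as the paper's own proof: pass to an annulus estimate, split $B$ into a finite head and a tail $B^*$ with $|B^*|+|g|\geq\delta/2$ on all of $\D$, apply Treil's Theorem to the pair $(B^*,g)$, and then absorb the conjugate of the finite head (unimodular on $\T$, hence invertible in $\H+C$) into the invertible coefficient. The only difference is presentational: you justify the ``by continuity'' step via lower semi-continuity and spell out the tail estimate, which the paper leaves implicit.
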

   \begin{proof}
    By continuity, there exists  $r>0$ such that on $\{r\leq |z| <1\}$
    \begin{linenomath}
$$|B|+|g|\geq \delta/2.$$
\end{linenomath}

    Let $B^*$ be  a tail of $B$, so that $|B^*|\geq \delta/2$ on $\{|z|\leq r\}$.
    Hence
    \begin{linenomath}
    $$\mbox{$|B^*|+|g|\geq \delta/2$ on $M(\H)$}.$$
    \end{linenomath}
    Let $b=B/B^*$. Note that $b$ is a finite \BP\ and hence $\ov b\in\H+C$.
    By Treil's result \cite{tr} (here Theorem \ref{treil}),
     there is a constant $C(\delta)$ and two functions 
    $R,h\in\H$, $R$ invertible in $\H$,   such that 
     \begin{linenomath}
    $$1=RB^*+hg$$
    \end{linenomath}
    and 
    \begin{linenomath}
    $$ ||R||_\infty +||R^{-1}||_\infty + ||h||_\infty \leq C(\delta/2).$$
    \end{linenomath}
    
   Therefore, as  $\H+C$-functions: 
   \begin{linenomath}
   $$1= (R\ov b) B+ h g.$$
   \end{linenomath}
    \end{proof}

   \begin{theorem}\label{bsr=1}
   The Bass stable rank of $\H+C$ equals one.
   \end{theorem}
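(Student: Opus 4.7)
Given $(F,G)\in U_2(\H+C)$, I may assume $\|F\|_\infty,\|G\|_\infty\leq 1$ and $|F|+|G|\geq\delta$ on $M(\H+C)$. The plan is to approximate by $\H$-data, combine Theorem~\ref{laro} (Laroco) with Lemma~\ref{special} to produce an approximate identity in $\H+C$, absorb the resulting error via Theorem~\ref{corona}, and then pass back to $(F,G)$ by a direct perturbation.

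First I would fix parameters $\eta_0$ and $\eta$, with $\eta_0$ depending only on $\delta$ and $\eta$ chosen much smaller. Since $\H+C=[\H,\bar z]$, write $F=\bar z^Nf+e_F$ and $G=\bar z^Mg+e_G$ with $f,g\in\H$ of norm at most $1+\eta$ and $\|e_F\|_\infty,\|e_G\|_\infty<\eta$. Because $|\bar z|=1$ on $M(\H+C)$, this yields $|f|+|g|\geq\delta-2\eta$ there. Applying Theorem~\ref{laro} to $f$ then gives $f=Bv+r$ with $B$ a Blaschke product, $v\in\H$ outer and invertible satisfying $|v|\geq\eta_0/4$ and $\|v\|_\infty\leq 2$, and $\|r\|_\infty<\eta_0$. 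A case split on whether $|g|$ or $|Bv|$ dominates at a given point of $M(\H+C)$ (using $\|v\|_\infty\leq 2$) gives $|B|+|g|\geq c\delta$ on $M(\H+C)$ for an absolute constant $c>0$, independent of $\eta$ and $\eta_0$. Lemma~\ref{special} then furnishes $u,h\in\H+C$ with $u$ invertible in $\H+C$ and $\|u\|_\infty,\|u^{-1}\|_\infty,\|h\|_\infty\leq C(\delta)$ such that $uB+hg=1$.

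Multiplying by $v$ and using $Bv=f-r$ produces the approximate reduction $uf+(hv)g=v+ur$. The error $ur$ has norm of order $C(\delta)\eta_0$, which cannot be dominated by $|v|\geq\eta_0/4$; this is the main obstacle. I would resolve it by invoking Theorem~\ref{corona} to find $\phi_1,\phi_2\in\H+C$ with $\phi_1f+\phi_2g=1$ and bounded norms, writing $r=(r\phi_1)f+(r\phi_2)g$, substituting, and rearranging to obtain
\meq{u(1-r\phi_1)\,f\;+\;(hv-ur\phi_2)\,g\;=\;v.}
For $\eta_0$ small enough that $\|r\phi_1\|_\infty<\tfrac12$, a Neumann series makes $\tilde u:=u(1-r\phi_1)$ invertible in $\H+C$ with $\|\tilde u\|,\|\tilde u^{-1}\|=O(C(\delta))$. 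Setting $Y_0:=(hv-ur\phi_2)/\tilde u\in\H+C$, the identity reads $f+Y_0 g=v/\tilde u$, which is invertible in $\H+C$ because $|v/\tilde u|\geq\eta_0/O(C(\delta))>0$ on $M(\H+C)$.

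Finally, with $Y:=Y_0\bar z^{N-M}\in\H+C$ (reading $\bar z^{N-M}$ as $z^{M-N}$ when $N<M$), one has $Y\bar z^M g=Y_0\bar z^N g$, so $\bar z^N f+Y\bar z^M g=\bar z^N(f+Y_0 g)=\bar z^N(v/\tilde u)$, which is invertible in $\H+C$ with lower bound of order $\eta_0/C(\delta)$. Then $F+YG=\bar z^N(v/\tilde u)+e_F+Ye_G$, and choosing $\eta$ so small that $\eta(1+\|Y\|_\infty)$ is strictly less than this lower bound forces $F+YG$ to be invertible in $\H+C$, reducing $(F,G)$. This gives $\text{bsr}(\H+C)\leq 1$, and the reverse inequality is automatic since $\H+C$ is not a field, so $\text{bsr}(\H+C)=1$.
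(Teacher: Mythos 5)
Your argument is correct, and it rests on the same two pillars as the paper's own proof: Laroco's factorization (Theorem~\ref{laro}) applied to the $\H$-part of the first coordinate, and Lemma~\ref{special} (Treil's theorem transported to $\H+C$ via a tail of the Blaschke product), which supplies the invertible coefficient $u$ in $uB+hg=1$. Where you genuinely diverge is in how the approximation errors in the \emph{first} coordinate are reabsorbed. The paper takes a soft route: it notes that functions of the form $\ov z^{\,n}vB$ with $v$ invertible in $\H$ are dense in $\H+C$, invokes the Corach--Su\'arez open--closed property of the reducibility set (Lemma~\ref{clopen-n}) together with the invertibility of the factor $\ov z^{\,n}v$ to reduce at once to pairs $(B,\psi)$ with $B$ a Blaschke product, and then only has to perturb the second coordinate explicitly. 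You instead keep everything quantitative: you multiply $uB+hg=1$ by $v$, absorb the Laroco remainder $r$ by writing $r=(r\phi_1)f+(r\phi_2)g$ with a bounded Corona solution from Theorem~\ref{corona} followed by a Neumann series for $1-r\phi_1$, and finally perturb back from $(\ov z^{\,N}f,\ov z^{\,M}g)$ to $(F,G)$ directly. This costs an extra appeal to the $\H+C$ Corona theorem with bounds and some care with the order of the parameters $\eta_0,\eta$ --- which does work out, since $\eta_0$, the Corona and Treil constants, and hence the a priori bound on $\|Y\|_\infty$ and the lower bound $\eta_0/O(C(\delta))$ for $|v/\tilde u|$ all depend only on $\delta$, so $\eta$ may legitimately be fixed as a function of $\delta$ at the outset. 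What it buys is a proof that bypasses Lemma~\ref{clopen-n} entirely and already yields the norm-controlled reduction $(u,v)$ with $\|u\|_\infty+\|u^{-1}\|_\infty+\|v\|_\infty\le C(\delta)$, which the paper only extracts afterwards, in the quantitative theorem following Theorem~\ref{bsr=1}, by combining the qualitative argument with Proposition~\ref{bounds}. The one cosmetic slip is the closing remark that $\mathrm{bsr}\ge 1$ because $\H+C$ is not a field; by the paper's definition the Bass stable rank is the smallest $n\in\{1,2,\dots\}$ with the reducibility property, so the lower bound is true by convention rather than by that reason.
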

   
   \begin{proof}
   Let $(\phi,\psi)$ be an invertible pair in $\H+C$. We may assume that  
   $||\phi||_\infty\leq 1$ and $||\psi||_\infty\leq 1$.
   Since $\H+C$ is the uniform closure of the set of functions
    $\{\ov z^n f: f\in\H, n\in \N\}$
        (see \cite{ga}), there exists $n\in\N$ and $f\in \H$  such that $||\phi-f \ov z^n||_\infty <\e$.
        By Theorem \ref{laro}, there is a \BP\ $B$ and a function $v$ invertible in
        $\H$ such that $||f-vB||_\infty<\e$. Hence the set of functions 
        \begin{linenomath}
        $$\mbox{$\{\ov z^n  vB: n\in\N, B$ Blaschke, $v$ invertible in $\H\}$}$$
        \end{linenomath}
        is dense in $\H+C$. By Lemma \ref{clopen-n} and the fact that the factors 
        $\ov z^n v$ are invertible in $\H+C$,   it suffices to show the 
        reducibility of the pairs $(B, \psi)$, where $B$ is any \BP\ such that $|B|+|\psi|\geq\delta>0$
        on $M(\H+C)$.
        
        To do this, we shall use Lemma \ref{special}. Choose  $n\in\N$ and $g\in \H$, $||g||_\infty\leq 1$,
         so that 
        \begin{linenomath}
        $$||\ov z^n g-\psi||_\infty < \min\biggl\{\frac{\delta}{2}, \frac{1}{2C(\delta/2)}\biggr\},$$
        \end{linenomath}
         where $C(\delta)$ is the constant from 
        Lemma  \ref{special}. 
        Now consider the pair $(B, g)$. We obviously have  (on $M(\H+C)$)
        \begin{linenomath}
        $$|B|+|g|= |B|+|\ov z^n g|\geq |B|+|\psi| - |\psi-\ov z^n g|\geq \delta/2.$$
        \end{linenomath}
        By Lemma \ref{special}, there exists $u\in \H+C$, $u$ invertible, and $h\in\H+C$
        with $||u||_\infty+||u^{-1}||_\infty+||h||_\infty\leq C(\delta/2)$ such that
        \begin{linenomath}
        $$1=uB+hg.$$
        \end{linenomath}
        Hence 
        \begin{linenomath}
        $$ |uB +(h z^n)\psi|=|uB+h g+h(z^n\psi-g)|\geq$$
         \end{linenomath}
         \begin{linenomath}
        $$ 1- ||h||_\infty\, ||\psi-\ov z^n g||_\infty\geq
        1-||h||_\infty\,  \frac{1}{2 C(\delta/2)} \geq 1/2.$$
        \end{linenomath}
        
        Thus $uB+(hz^n)\psi$ is invertible in $\H+C$. Hence
         $1= xB+y\psi$ where $x\in\H+C$ is invertible and 
        \begin{linenomath}
        $$\mbox{$\max\{||x||_\infty,  ||y||_\infty\}\leq 2 C(\delta/2)$ and  $||x^{-1}||_\infty\leq 2C^2(\delta/2)$}.$$
      This shows that the pair $(B,\psi)$ is reducible in $\H+C$.
        \end{linenomath}
   \end{proof}
   
  Combining Proposition \ref{bounds} with the proof above, we get the following
   extension of Theorem \ref{bsr=1}.

   \begin{theorem}
   There exists a constant $C(\delta)$ depending only on $\delta\in\;]0,1[$ such
   that for every pair $(\phi,\psi)$ of functions in the unit ball of 
   $\H+C$ satisfying $|\phi|+|\psi|\geq \delta$
   on $M(\H+C)$, there is a solution $(u,v)\in (\H+C)^2$ of the Bezout equation
   $u\phi+v\psi=1$,  where $u$ is invertible in $\H+C$, and such that
    $||u||_\infty+||u^{-1}||_\infty+||v||_\infty\leq C(\delta)$.
     \end{theorem}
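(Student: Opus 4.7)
My plan is to combine Proposition \ref{bounds} with a quantitative rerun of the proof of Theorem \ref{bsr=1}. First, I apply Corollary \ref{niko} to secure a Corona identity $x\phi+y\psi=1$ with $\|x\|_\infty+\|y\|_\infty\le M(\delta):=2\chi(\delta/2)$; this will let me transport bounds from a single good approximation of $\phi$ back to $\phi$ itself. I fix the admissible perturbation level as $\eta:=1/(4M(\delta))$, so that $\eta\le \min\{\|y\|^{-1},(2\|x\|)^{-1}\}$.

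Using density of $\{\ov z^kf:f\in\H,k\in\N\}$ together with Laroco's Theorem \ref{laro}, I select one approximation $\phi_0:=\ov z^{k_0}v_0B_0$ with $B_0$ a Blaschke product and $v_0\in\H$ invertible. Crucially, I fix the Laroco parameter $\e_2\sim\eta$ (not letting it shrink to zero), so that $\|v_0\|_\infty\le 3$ and $\|v_0^{-1}\|_\infty\le 4/\e_2=O(M(\delta))$ while $\|\phi-\phi_0\|_\infty\le\eta$. Similarly I approximate $\psi$ by $\ov z^{m_0}g_0$ with $g_0\in\H$, $\|g_0\|_\infty\le 2$, and $\|\psi-\ov z^{m_0}g_0\|_\infty\le\mu$ for a small $\mu=\mu(\delta)$ to be fixed below.

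A two-case argument using $\|v_0\|\le 3$ converts $|\phi_0|+|\ov z^{m_0}g_0|\ge\delta-\eta-\mu$ into $|B_0|+|g_0|\ge\delta'$ on $M(\H+C)$ for some $\delta'\gtrsim\delta$. Rescaling $g_0/2$ into the unit ball, Lemma \ref{special} furnishes $u_0$ invertible in $\H+C$ and $\tilde h_0\in\H+C$ with $\|u_0\|+\|u_0^{-1}\|+\|\tilde h_0\|\le C_1:=C(\delta'/2)$ and $u_0B_0+(\tilde h_0/2)g_0=1$. Substituting $B_0=z^{k_0}v_0^{-1}\phi_0$ and $g_0=z^{m_0}\psi-z^{m_0}(\psi-\ov z^{m_0}g_0)$, and then dividing through by the invertible element $u_0z^{k_0}v_0^{-1}$, yields
\[
\phi_0+H_0\psi=W_0\,u_0^{-1}\ov z^{k_0}v_0
\]
with $H_0=(\tilde h_0/2)v_0u_0^{-1}z^{m_0-k_0}\in\H+C$ and $W_0=1+(\tilde h_0z^{m_0}/2)(\psi-\ov z^{m_0}g_0)$. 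The choice $\mu\le 1/(2C_1)$ forces $\|W_0-1\|\le 1/2$, so $W_0$ is invertible with both $\|W_0\|$ and $\|W_0^{-1}\|$ at most $2$. Collecting, the reducibility constant $K_0:=\|H_0\|+\|\phi_0+H_0\psi\|+\|(\phi_0+H_0\psi)^{-1}\|$ is dominated by a polynomial in $C_1(\delta)$ and $M(\delta)$, hence depends only on $\delta$.

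Finally I invoke the argument of Proposition \ref{bounds} with this single approximation $\phi_0$: the proof of that proposition uses only one sufficiently close element of the posited sequence, namely one with $\|\phi-\phi_0\|\le\min\{1,\|y\|^{-1},(2\|x\|)^{-1}\}$, which my choice of $\eta$ guarantees. The explicit formula $h:=(y(\phi_0-\phi)+H_0)(x\phi_0+y\psi)^{-1}\in\H+C$ then gives $\phi+h\psi=(\phi_0+H_0\psi)(x\phi_0+y\psi)^{-1}$ invertible with $\|h\|+\|\phi+h\psi\|+\|(\phi+h\psi)^{-1}\|\le 8K_0$. Setting $u:=(\phi+h\psi)^{-1}$ and $v:=uh$ delivers the Bezout identity $u\phi+v\psi=1$ with $u$ invertible in $\H+C$ and $\|u\|_\infty+\|u^{-1}\|_\infty+\|v\|_\infty\le 12K_0^2=:C(\delta)$. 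The main obstacle is the Laroco trade-off: shrinking $\e_2$ tightens the approximation but inflates $\|v_0^{-1}\|$ and thus the bound on $\|(\phi_0+H_0\psi)^{-1}\|$. The resolution is to tune $\e_2$ to the Corona scale $1/M(\delta)$ rather than driving it to zero, so that $\|v_0^{-1}\|=O(M(\delta))$ is a $\delta$-dependent constant while the residual error $\|\phi-\phi_0\|\sim 1/M(\delta)$ is precisely the margin the Proposition \ref{bounds} identity can absorb.
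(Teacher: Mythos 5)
Your proposal is correct and follows essentially the same route as the paper's own proof: a bounded Corona solution $x\phi+y\psi=1$, a Laroco approximation $\ov z^{k}vB$ of $\phi$ with the Laroco parameter tuned to the Corona scale (so that $v^{-1}$ stays controlled while the approximation error fits the margin of the perturbation identity), reduction of the Blaschke pair via Lemma \ref{special} after approximating $\psi$ by $\ov z^{m}g$, and finally the identity from Proposition \ref{bounds} to transfer invertibility of $\phi_0+H_0\psi$ back to $\phi+h\psi$. The only cosmetic difference is that you unpack the step the paper delegates to ``the proof of Theorem \ref{bsr=1}'' and carry slightly different numerical constants.
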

     
     \begin{proof}
     According to Theorem \ref{corona}, respectively Corollary \ref{niko}, 
     let $(x,y)$ be a solution in $\H+C$ of the Bezout equation $x\phi+y\psi=1$
     with $||x||_\infty+||y||_\infty\leq \tilde\chi(\delta)$. 
     Using Theorem \ref{laro}, we  may choose a \BP\ $B$ and a function $h$ invertible in $\H$
      such that 
      $$||\phi-\ov z^{\nu} Bh||_\infty<\sigma(\delta):=\min\{\delta/2, \bigl(2\tilde\chi(\delta)\bigr)^{-1}\}$$
      and $2\geq |h|>\sigma(\delta)/4$.     Since $|B|+|\psi|\geq \delta/4$ on $M(\H+C)$,
       there   exists by the proof
      of Theorem \ref{bsr=1} above a constant $C_1(\delta)$ and a function $q\in\H+C$
      such that $ F:=B+q\psi$ is   invertible in $\H+C$ and so that 
      $$||q||_\infty  +||F||_\infty+||F^{-1}||_\infty\leq C_1(\delta).$$ 
     
      Let $f^*:=\ov z^{\nu}hB$ and $u^*:=\ov z^{\nu}hq$. Then $v^*:= f^*+u^*\psi$
      is invertible in $\H+C$ and 
      $$||u^*||_\infty+||v^*||_\infty+||(v^*)^{-1}||_\infty\leq C_2(\delta):=6C_1(\delta)+\frac{4}{\sigma(\delta)} C_1(\delta).$$
      Now, as in the proof of Proposition \ref{bounds},
      we see from
      $$ f^*+u^*\psi =\phi(xf^*+y\psi) + \bigl( y(f^*-\phi)+u^*\bigr) \psi$$
      that $1=u\phi+v\psi$ with
      $$||u||_\infty+||u^{-1}||_\infty+||v||_\infty\leq C_3(\delta).$$
     
      \end{proof}
  
  In \cite{su}, Su\'arez showed that  $\text{tsr}(\H)=2$. Using this result we deduce
  the topological stable rank for $\H+C$.

   \begin{theorem}\label{tsr}
   The topological stable rank of $\H+C$ is 2.
   \end{theorem}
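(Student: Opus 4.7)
The plan is to prove the two inequalities $\text{tsr}(\H+C)\le 2$ and $\text{tsr}(\H+C)\ge 2$ separately, drawing on Su\'arez's theorem $\text{tsr}(\H)=2$ for the upper bound and on the thin-point/Hoffman-map construction already used in the lower estimate of Corollary \ref{niko} for the lower bound.

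For the upper bound I exploit the density of $\{\ov z^n f:f\in\H,\, n\in\N\}$ in $\H+C$. Given $(F_1,F_2)\in(\H+C)^2$ and $\e>0$, I approximate each $F_j$ within $\e/2$ by $\ov z^{N_j}f_j$ with $f_j\in\H$, arrange a common exponent $N$ by absorbing the extra powers of $z$ into the $\H$ factors, and then apply Su\'arez's theorem to perturb $(f_1,f_2)$ within $\e/2$ to a pair $(g_1,g_2)\in U_2(\H)$ with $a g_1+b g_2=1$ for some $a,b\in\H$. Since $(az^N)(\ov z^N g_1)+(bz^N)(\ov z^N g_2)=1$ on $\T$, the pair $(\ov z^N g_1,\ov z^N g_2)$ lies in $U_2(\H+C)$ and is within $\e$ of $(F_1,F_2)$.

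For the lower bound I must exhibit a single element of $\H+C$ that is not a uniform limit of invertibles. I take $b$ to be a thin \IBP\ and $m$ a thin point in the closure of its zero sequence; by Schwarz's lemma, $b\circ L_m(z)=e^{i\theta}z$. Assuming for contradiction that $b_n\to b$ with $b_n\in(\H+C)^{-1}$, so that $|b_n|\ge\delta_n>0$ on $M(\H+C)$, I restrict along $L_m$: since $P(m)\ss M(\H+C)$ (because $m\notin\D$), each $b_n\circ L_m$ is bounded below by $\delta_n$ on $\D$ and hence lies in $\H^{-1}$, while $b_n\circ L_m\to e^{i\theta}z$ uniformly on $\D$. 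Rouch\'e's theorem applied on the circle $|z|=r$, for any $r\in(0,1)$ exceeding the uniform distance, then forces $b_n\circ L_m$ to have a zero in $\{|z|<r\}\ss\D$, contradicting its invertibility in $\H$.

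The main technical point I expect to confront is verifying that the Hoffman pull-back is well-defined and preserves invertibility when one works in $\H+C$ rather than $\H$. Writing a general $f=h+c\in\H+C$ with $h\in\H$ and $c\in C(\T)$, the key fact is that every non-trivial Gleason part in $M(\H+C)$ lies in a single fiber of the natural projection $\pi:M(\H+C)\to\T$, so $c\circ L_m$ reduces to the constant $c(\pi(m))$ and $f\circ L_m=h\circ L_m+c(\pi(m))$ is a bounded analytic function on $\D$. With this observation in hand, the Rouch\'e step completes the argument.
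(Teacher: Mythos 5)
Your proposal is correct and follows essentially the same route as the paper: the upper bound via approximation by $\ov z^{N}f$ and Su\'arez's theorem $\text{tsr}(\H)=2$, and the lower bound via an interpolating Blaschke product, the Hoffman map $L_m$, and Rouch\'e's theorem. The only differences are cosmetic — you specialize to a thin Blaschke product so that $b\circ L_m$ is explicitly a rotation (the paper uses any infinite interpolating Blaschke product, for which $b\circ L_m$ is analytic, vanishes at $0$, and is not identically zero), and you spell out the fiber argument showing that $u\circ L_m$ is analytic for $u\in\H+C$, a point the paper leaves implicit.
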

   
   \begin{proof}
   First we show that the topological stable rank of $\H+C$ is at most 2.
   Let $(\phi_1,\phi_2)\in (\H+C)^2$. Approximate $\phi_j$ by functions
    of the form $\ov{z^{n_j}}f_j$, where $f_j\in\H$,
   say $||\ov{z^{n_j}}f_j-\phi_j||_\infty<\e$,  $j=1,2$.
   
    Since the topological stable rank of $\H$ is two, there exists $g_j\in\H$
   such that $||g_j-f_j||_\infty\leq \e$ and so that $(g_1,g_2)\in U_2(\H)$.
   Obviously $(\ov{z^{n_1}}g_1, \ov{z^{n_2}}g_2)\in U_2(\H+C)$ and
   $||\ov{z^{n_j}}g_j-\phi_j||_\infty<2\e$. Thus ${\rm tsr}\; (\H+C)\leq 2$.
      
   Let $b$ be an (infinite)  \IBP. Note that $b$ is not invertible in $\H+C$. Let $m\in M(\H+C)$
   be a zero of $b$ and let $L_m$ be the associated Hoffman map.
   Since $b\circ L_m$ is analytic, but not identically zero,  we see  that  
    $b$ cannot be uniformly approximated on $M(\H+C)$ by invertibles in $\H+C$, 
    say $u_n$, since otherwise  
   $||b\circ L_m-u_n\circ L_m||_\infty\to 0$; a contradiction to Rouch\'e's theorem.
   Thus ${\rm tsr}\; (\H+C)\geq 2$.
    \end{proof}

    Next we deal with the dense stable rank of $\H+C$.
    Recall that if $A$ is a commutative unital Banach algebra, then $E$ is an $A$-convex
     subset of $M(A)$ if
     $$\forall x\notin E\; \exists f\in A: |f(x)|>\sup_E |f|.$$
     We let $\hat E$ denote the $A$-convex hull of a closed set $E\ss M(A)$.  This is given by
     $$\hat E=\{m\in M(A): |f(m)|\leq \sup_E|f|\;,\; \forall f\in A\}.$$
     Note that $E$ is $A$-convex if and only if $E=\hat E$.
     We say that $E$ is a proper $A$-convex set if $E=\hat E$ and $\hat E\not=M(A)$.


    \begin{theorem}\label{dsr=1}
    The dense stable rank of $\H+C$ is one.
     \end{theorem}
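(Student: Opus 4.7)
My plan is to prove the upper bound $\text{dsr}(\H+C)\le 1$; the matching lower bound follows from $\text{bsr}(\H+C)=1$ (Theorem \ref{bsr=1}) together with Theorem \ref{ranks}. The key technical ingredient is the $\H+C$-convex approximation lemma announced in the Introduction: on every proper $\H+C$-convex subset $E\subsetneq M(\H+C)$, each $F\in\H+C$ satisfying $|\hat F|\ge\delta>0$ on $E$ can, for every $\e>0$, be uniformly approximated on $E$ by an invertible element of $\H+C$.

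Given a continuous homomorphism $f:\H+C\to B$ with dense image, an invertible $b\in U_1(B)$ with $1/\delta:=\|b^{-1}\|_B$, and $\e>0$, I would proceed as follows. By density of $f(\H+C)$ in $B$, choose $a_0\in\H+C$ with $\|f(a_0)-b\|_B<\min\{\e/2,\delta/4\}$; then $f(a_0)\in U_1(B)$ and $|m(f(a_0))|\ge\delta/2$ for every $m\in M(B)$, which translates through the adjoint $f^{\ast}:M(B)\to M(\H+C)$ into $|\hat a_0|\ge\delta/2$ on $E:=f^{\ast}(M(B))$. A character-separation argument, carried out after composing $f$ with the Gelfand transform $B\to\tilde B:=\overline{\hat B}\subseteq C(M(B))$ (a uniform algebra, still receiving a dense-image homomorphism from $\H+C$ whose pull-back of $M(\tilde B)$ is again $E$), shows that $E$ is $\H+C$-convex; the extreme case $E=M(\H+C)$ would force $f$ to be a topological isomorphism (via $\|a\|_{\H+C}\le\sup_E|\hat a|\le\|f(a)\|_B\le\|f\|\|a\|_{\H+C}$) and trivialises the conclusion, so we may assume $E$ is proper. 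Applying the approximation lemma to $\hat a_0$ on $E$ then produces $\tilde a\in(\H+C)^{-1}$ with $\sup_E|\hat{\tilde a}-\hat a_0|$ as small as one wishes, from which the target inequality $\|f(\tilde a)-b\|_B<\e$ should follow.

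The main obstacle is the final step of converting the uniform-on-$E$ approximation into a genuine $B$-norm estimate for $\|f(\tilde a-a_0)\|_B$. When $B$ is itself a uniform algebra the $B$-norm equals $\sup_{M(B)}|\hat{\cdot}|$, which pulls back exactly to $\sup_E|\cdot|$ via $f^{\ast}$, so the estimate is immediate; for a general commutative Banach algebra $B$ one controls directly only the spectral radius $r_B(f(\tilde a-a_0))=\sup_E|\hat{\tilde a}-\hat a_0|$, and the remaining work is to lift this to a full $B$-norm estimate. The cleanest route, which I would follow, is to invoke the Corach--Su\'arez characterisation of the dense stable rank in terms of uniform approximation on proper $A$-convex subsets of $M(A)$ \cite{cs3}, which reduces $\text{dsr}(\H+C)\le 1$ directly to the approximation lemma; alternatively, one reduces to the uniform algebra $\tilde B$ and exploits the openness of $U_1(B)$ in $B$ together with the equivalence between invertibility in $B$ and non-vanishing of the Gelfand transform on $M(B)$ to pass from uniform-on-$M(B)$ approximation back to $B$-norm approximation of the invertibles $f(\tilde a)$.
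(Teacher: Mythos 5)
There is a genuine gap. Your proposal reduces $\text{dsr}(\H+C)\le 1$ to what you call ``the $\H+C$-convex approximation lemma announced in the Introduction'': that every zero-free function on a proper $\H+C$-convex subset $E$ of $M(\H+C)$ is uniformly approximable on $E$ by invertibles of $\H+C$. But that statement is not a citable ingredient --- it is precisely what the paper's proof of this theorem establishes, and it is the only non-routine content of the theorem. The reduction you spend most of your effort on (passing from a dense-image homomorphism $f:\H+C\to B$ to uniform approximation on the $\H+C$-convex set $E=\ov{f^{\ast}(M(B))}$, and the converse lifting of spectral-radius estimates back to $B$-norm estimates) is exactly the Corach--Su\'arez characterisation that the paper simply invokes (cited via \cite{nisu}); re-deriving it buys nothing, and your sketch of it in any case defers the hard direction to that same reference. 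So after your reduction you are left with the entire original problem unsolved.

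What is missing is the following argument, which is the substance of the paper's proof. Given $\phi\in\H+C$ zero-free on the $\H+C$-convex set $E$, approximate $\phi$ uniformly by $\ov z^{n}f$ with $f\in\H$, so that $f$ is still zero-free on $E$. Let $\check E$ be the $\H$-convex hull of $E$ inside $M(\H)$; a short separation argument (approximate a separating $\psi\in\H+C$ by $\ov z^{n}g$, $g\in\H$) shows $\check E\cap M(\H+C)=E$, hence $\check E=E\cup S$ with $S=\check E\cap\D$ and $\ov S\setminus\D\subseteq E$. Consequently $f$ has only finitely many zeros in $S$; write $f=qF$ with $q$ the finite Blaschke product over those zeros, so $F$ is zero-free on the $\H$-convex set $\check E$. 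The Nicolau--Su\'arez approximation theorem for $\H$ then yields an invertible $h\in\H$ with $\sup_{\check E}|F-h|$ small, and since $|q|=1$ and $|\ov z^{n}|=1$ on $M(\H+C)$, the function $\ov z^{n}qh$ is invertible in $\H+C$ and approximates $\phi$ on $E$. Without this step --- in particular without the passage from the $\H+C$-hull to the $\H$-hull and the removal of the finitely many interior zeros by a finite Blaschke factor --- your proposal does not prove the theorem.
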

      \begin{proof}
     Let $E$ be an $\H+C$-convex subset of $M(\H+C)$.
     By \cite{nisu} it is sufficient to prove that any function $\phi\in\H+C$ that does not 
     vanish on $E$ can be uniformly approximated on $E$ by an invertible function
     in $\H+C$.  To this end, we first approximate $\phi$ on $\T$ (hence on $M(\H+C)$)
      by  a function
     of the form  $\ov z^n f$, where $n\in \N$ and $f\in\H$,
     say $||\phi-\ov z^n f||_\infty<\e/2$.  By choosing $\e$ sufficiently small, we see 
     that $f$ does not vanish on $E$, too. 
     Let $$\check E=\{m\in M(\H): |h(m)|\leq \sup_E |h|, \;\forall h\in \H\}$$
    be the  $\H$-convex hull of $E$. 
  We show that $\check E\inter M(\H+C)=E$. To this end, 
  let $m_0\in M(\H+C)\setminus E$. 
Since $E$ is $\H+C$-convex, there
exists a function $\psi\in\H+C$ such that $|\psi(m_0)|>\sup_E |\psi|$. Uniformly approximating
$\psi$ by a function of the form $\ov z^n g$, 
shows that $|g(m_0)|>\sup_E |g|$ for some $g\in\H$.
Thus $m_0\not\in\check E$, the $\H$-convex hull of $E$. 
This shows that $\check E\inter M(\H+C)=E$.

Now $\check E$ can be written as 
$\check E=E\union S$, where $S=\check E\inter \D$. We probably have $S=\emp$, however this isn't necessary for the rest of the proof.
 Note that $\check E$ is closed and $\ov S\setminus \D\ss E$.
 Recall that our function $f$ above does not vanish on $E$; hence $f$ can have only 
 finitely many zeros in $S$.  Write $f=qF$, where $q$ is the finite \BP\
 formed with the zeros of $f$ in $S$. Thus $F$ does not vanish on $\hat E$.
By \cite{nisu}, for every $\e>0$, 
     there is an invertible function $h\in\H$ such that $\sup_{\hat E} |F-h|<\e/2$. 
     Hence, by noticing that $|q|=1$ on $M(\H+C)$, we have $|qh-f|= |qh-qF|<\e/2$ on $E$.
      Therefore, on $E$, 
     $$||\ov z^n q h- \phi||_\infty\leq ||\ov z^n(qh-  f ) ||_\infty+ ||\phi-\ov z^n f||_\infty<\e.$$
     Since $\ov z^n q h$ does not vanish on $M(\H+C)$, that function is invertible in $\H+C$.
     \end{proof}
   
  \parindent=0pt
  {\bf Remark}
   Using Theorem \ref{ranks}  and Theorem \ref{dsr=1} we get a second proof of
   the fact that $\text{bsr}(\H+C)=1$ (see Theorem \ref{bsr=1}).

 \end{document}